\theoremstyle{definition}
\newtheorem{thm}{Theorem}[section]
\newtheorem{cor}[thm]{Corollary}
\newtheorem{lem}[thm]{Lemma}
\newtheorem{prop}[thm]{Proposition}
\newtheorem{defn}[thm]{Definition}
\numberwithin{equation}{section}
\title{Asymptotic behavior of discrete Schr\"{o}dinger equation on the hexagonal triangulation}
\author{Huabin Ge, Bobo Hua, Longsong Jia, Puchun Zhou
}
\date{}
\begin{document}
	\maketitle
	\begin{abstract}
	 In this article, we prove the decay estimate for the discrete Schr\"odinger equation (DS) on the hexagonal triangulation. The $l^1\rightarrow l^\infty$ dispersive decay rate is $\left\langle t\right\rangle^{-\frac{3}{4}}$, which is faster than the decay rate of  DS on the 2-dimensional lattice $\mathbb{Z}^2$, which is $\left\langle t\right\rangle^{-\frac{2}{3}}$, see \cite{stefanov2005asymptotic}. The proof relies on the detailed analysis of singularities of the corresponding phase function and the theory of uniform estimates on oscillatory integrals developed by Karpushkin \cite{kar_paro}. Moreover, we prove the Strichartz estimate and give an application to the discrete nonlinear Schr\"odinger equation (DNLS) on the hexagonal triangulation.
   \\
		
	\end{abstract}
	
\section{Introduction} \label{sec:1}
\subsection{discrete Schr\"odinger equation on the hexagonal triangulation}
Discrete Schr\"odinger equations are fundamental discrete dynamical models, which has many applications in physics, see e.g. \cite{morandotti1999dynamics,onorato2001freak,kopidakis2001targeted,burger2002quasi,christodoulides2003discretizing,livi2006self}.
Let $G=(V,E)$ be an undirect graph with weights $\{w_{e}\}_{e\in E}$ on edges. We write $u\sim v$ if $u$ and $v$ are connected by an edge in $E$.
  A Schr\"odinger equation on $G$ describes the continuous-time random walk (CTQW) on $G$, see e.g. \cite{salimi2008study}, written as
   \begin {eqnarray}
   \left\{
   \begin{aligned}
   \label{LS}&i\partial_tu(v,t)+\Delta_Gu(v,t)=0,~~\forall (v,t)\in V\times[0,\infty),\\
   &u(v,0)=\psi(v),~~\forall v\in V.
   \end{aligned}
   \right.
   \end{eqnarray}
   where the discrete Laplacian $\Delta_G$ is given by
   \begin{align}
   \Delta_Gf_u=\sum_{v:v\sim w}\omega_{uv}(f_v-f_w).
   \end{align}

The Strichartz estimate is a powerful tool for analyzing the asymptotic behavior of dispersive equations, which was first introduced in \cite{strichartz1977restrictions}. 
To establish the Strichartz estimate, one usually need an $l^1\rightarrow l^\infty$ estimate (or $L^1\rightarrow L^{\infty}$ estimate in the continuous case) with the form
\begin{align}
\|u(v,t)\|_{l^\infty}\le C\left\langle t\right\rangle^{-\sigma}\|\psi\|_{l^1},\label{form}
\end{align}
where $\left\langle t\right\rangle$ is the Japanese bracket given by $\left\langle t\right\rangle:=1+|t|, $ and $C$ is a constant independent of $t.$ In the continuous case, it is well-known that the $L^1\rightarrow L^\infty$ estimate for linear Schr\"odinger equations in $\mathbb{R}^d$ holds with $\sigma=\frac{d}{2}.$ As for the discrete Schr\"odinger equations on $\mathbb{Z}^d$, the $l^1\rightarrow l^{\infty}$ estimate holds for $\sigma=\frac{d}{3}$, see \cite{stefanov2005asymptotic}. Application of those dispersive estimates can be found in \cite{MR2468536,MR2578796,MR2877358,MR3117519}, where the spectral problem and stability of breathers are discussed. Dispersive estimates for the discrete wave equation (DW) and the discrete Klein-Gorden equation (DKG) on $\mathbb{Z}^d$ are also studied in \cite{stefanov2005asymptotic,MR2529950,MR3433284,MR4331254,bi2023wave}. 

 Dispersive estimates on metric graphs has also been studied in the past 20 years.\ In  \cite{adami_starshape,banica2011dispersion,banica2014dispersion,ignat2010strichartz,mehmeti2015dispersive,mehmeti2017dispersive} the dispersive estimates for star-shaped networks and tadpole graph are discussed. Recently, dispersive estimates on regular trees and Cartesian product of the integer lattice and a finite graph were studied by Ammari and Sabri \cite{MR4645063,MR4651402}. After that finite metric graphs with infinite ends are also considered, see \cite{mehmeti2023dispersive}. We should also remark that the dispersive estimate of quantum walk in discrete time on $1-$dimensional lattice is established in \cite{maeda2022dispersive}.
 
  Those works inspired us to explore dispersive estimates on more general graphs, especially the triangulations of surfaces. In this article, we mainly focus on the discrete Schr\"odinger equation (DS) on the $1$-skeleton of the hexagonal triangulation $\mathcal{T}_H$, which is a regular triangulation of $\mathbb{R}^2$ with degree $6$, as shown in Figure \ref{hexagon}. The weights we focus on here are the simplest weights with $\omega\equiv 1.$ We denote by $\Delta_{G_H}$ the discrete Laplacian on $\mathcal{T}_H$ . 
  \begin{figure}[htbp]
  	\centering
  	\includegraphics[scale=0.3]{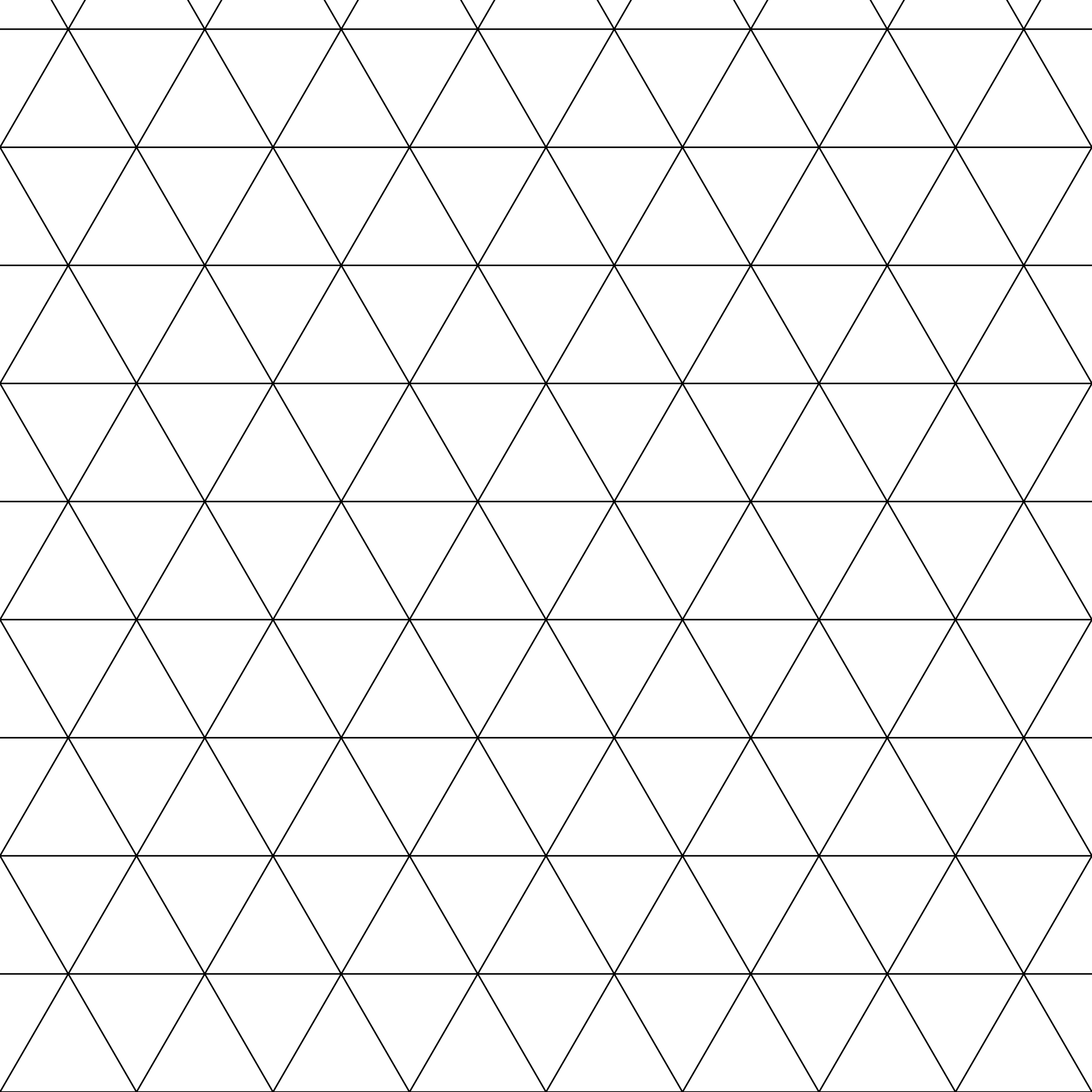}
  	\captionof{figure}{\small Hexagonal triangulation $\mathcal{T}_H$ of the plane.}
  	\label{hexagon}
  \end{figure} 
  
Since the hexagonal triangulation can be viewed as a Cayley graph of the two-dimensional lattice $\mathbb{Z}^2$, 
with the help of discrete Fourier analysis on the lattice, we can obtain the explicit formula of the solution to \eqref{LS} in $C^1([0,+\infty);l^2(\mathbb{Z}^2))$.
\begin{align}\label{formula}
    u(n,t)=2\sum_{k\in \mathbb{Z}^2}u(k,0)\int_{[0,2\pi]^2}e^{-itg(x)+i\left\langle(k-n),x\right\rangle}\mathrm{d}x,~~\forall n\in \mathbb{Z}^2,
\end{align}
where $x=(x_1,x_2)^t$ and 
\begin{align}\label{phasefunction}
    g(x)=6-2\cos x_1-2\cos x_2-2\cos(x_1+x_2).
\end{align}

To obtain the $l^1\rightarrow l^{\infty}$ estimate of the solution to \eqref{LS}, it is sufficient to obtain a uniform estimate of the oscillatory integral 
$G(l,t)=\int_{[0,2\pi]^2}e^{-itg(x)+i\left\langle l,x\right\rangle}\mathrm{d}x$, i.e.
\begin{align}\label{osc_est}
G(l,t)\le C\left\langle t\right\rangle^{-\sigma},~~\forall~l\in \mathbb{Z}^2,
\end{align}
where $C$ is a positive constant independent of the parameter $l$.

The estimate of oscillatory integrals required for the Schr\"odinger equation on the hexagonal triangulation differs from the case on $\mathbb Z^2$ in \cite{stefanov2005asymptotic}, as the inability to apply the method of separation of variables poses significant difficulties for the estimate.
In this article, we will apply the method in \cite{MR1611132}, which established the dispersive estimate of the discrete wave equation on $\mathbb
{Z}^2$, and reduce the estimate \eqref{osc_est} to a uniform estimate of a  oscillatory integral with parameters.

Let $v=(v_1,v_2)=l/t$, then we only need to estimate the oscillatory integral
\[
I(t,v)=G(tv,t)=\int_{[0,2\pi]^2}e^{-it\phi(v,x)}\mathrm{d}x,~~(t,v)\in \mathbb{R}\times \mathbb{R}^2,
\]
where the phase function $\phi(v,x)$ is given by 
\begin{align}\label{phase_core}
\phi(v,x)=g(x)-\left\langle v,x\right\rangle.
\end{align}
The problem then is reduced to find a uniform estimate of $I(t,v).$ First, we transform $I(x,t)$ to an oscillatory integral on $\mathbb{R}^2.$ Let $\mathbf{a}(x)$ be a function in $C_c^{\infty}(\mathbb{R}^2)$, whose support is contained in $[-4\pi,4\pi]^2.$ Moreover, $\mathbf{a}(x)$ is chosen to have the following properties.
\begin{enumerate}
    \item There exists a neighborhood $W$ of $[0,2\pi]^2$ such that \[\mathbf{a}(x)\neq 0,~~\forall x\in W.\]
    \item $\sum_{l\in \mathbb{Z}^2}\mathbf{a}(x+2\pi l)=1,~~\forall x\in \mathbb{R}^2.$
\end{enumerate}
Then we have
\begin{align}
I(t,v)&=\sum_{l\in \mathbb{Z}^2}\int_{[0,2\pi]^2}e^{-it\phi(x)}\mathbf{a}(x+2\pi l)\mathrm{d}x\nonumber\\&=\int_{\mathbb{R}^2}e^{-it\phi(x)}\mathbf{a}(x)\mathrm{d}x\label{final_form}
\end{align}

With the help of uniform estimates of oscillatory integrals in Section \ref{sec:2} and the analysis of singularities of the phase function $\phi(x)$ in Section \ref{sec:4}, we get the following lemma.
\begin{lem}\label{mainlem}
    Let $u$ be the solution to \eqref{LS}, then we have the following $l^1\rightarrow l^{\infty}$ decay estimate.
    \begin{align}
        \|u(\cdot,t)\|_{l^\infty}\le C\left\langle t\right\rangle^{-\frac{3}{4}}\|\psi\|_{l^1}
    \end{align}
\end{lem}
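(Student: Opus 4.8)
The plan is to convert the $l^1\to l^\infty$ bound into a uniform-in-$v$ estimate for the oscillatory integral $I(t,v)$ and then quote the results of Sections~\ref{sec:2} and~\ref{sec:4}. From the representation \eqref{formula}, $u(n,t)=2\sum_{k\in\mathbb Z^2}\psi(k)\,G(k-n,t)$ with $G(l,t)=\int_{[0,2\pi]^2}e^{-itg(x)+i\langle l,x\rangle}\,\mathrm dx$, so
\[
\|u(\cdot,t)\|_{l^\infty}\ \le\ 2\,\|\psi\|_{l^1}\,\sup_{l\in\mathbb Z^2}|G(l,t)|,
\]
and it suffices to prove $|G(l,t)|\le C\langle t\rangle^{-3/4}$ uniformly in $l$. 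For $|t|\le 1$ this is immediate, since $|G(l,t)|\le 4\pi^2$ while $\langle t\rangle^{-3/4}\ge 2^{-3/4}$. For $|t|\ge 1$ I set $v=l/t\in\mathbb R^2$, so by \eqref{final_form} one has $G(l,t)=I(t,v)=\int_{\mathbb R^2}e^{-it\phi(v,x)}\mathbf a(x)\,\mathrm dx$, and the whole matter reduces to showing $\sup_{v\in\mathbb R^2}|I(t,v)|\le C|t|^{-3/4}$ for $|t|\ge1$.

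First I would dispose of large parameters. By \eqref{phasefunction} each component of $\nabla_x g$ takes values in $[-4,4]$, so if $v\notin K:=[-5,5]^2$ then $|\partial_{x_1}\phi(v,x)|\ge1$ or $|\partial_{x_2}\phi(v,x)|\ge1$ on all of $\operatorname{supp}\mathbf a$, and repeated integration by parts in that variable gives $|I(t,v)|\le C_N|t|^{-N}$, uniformly for such $v$. It remains to handle $v$ in the compact box $K$. For fixed $v\in K$ I split $\mathbf a$ into a piece supported away from the critical set $\{x:\nabla_x g(x)=v\}$, on which integration by parts again yields $O(|t|^{-N})$, plus finitely many pieces, each localized near one critical point $x^\ast(v)$. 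The content of Section~\ref{sec:4} is that for the specific phase $g$ the Hessian of $x\mapsto\phi(v,x)$ never degenerates to rank $0$, and that near each critical point the phase is, after a smooth change of variables, a deformation of a singularity of type $A_j$ with $j\le3$. Feeding this into the uniform oscillatory integral estimates of Section~\ref{sec:2} (Karpushkin's stability theorem~\cite{kar_paro}) bounds each local contribution by $C|t|^{-3/4}$ at an $A_3$ point and by the faster $C|t|^{-5/6}$, $C|t|^{-1}$ at $A_2$, nondegenerate points, with the constants \emph{independent of $v\in K$}; here it is essential that Karpushkin's bound holds on a whole neighborhood of a given phase in the relevant topology, so that the $v$-dependence is absorbed after covering $K$ by finitely many such neighborhoods. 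Summing the finitely many pieces gives $|I(t,v)|\le C|t|^{-3/4}$, and together with the first paragraph this completes the proof.

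The two genuinely hard points, both carried out in Sections~\ref{sec:2}--\ref{sec:4}, are: (i) showing that $g(x)=6-2\cos x_1-2\cos x_2-2\cos(x_1+x_2)$ never produces a singularity worse than $A_3$, i.e. that the system $\nabla g(x)=v$, $\det\operatorname{Hess}g(x)=0$ together with the vanishing of the relevant third-order directional derivative along the kernel of the Hessian has no common solution --- a computation with trigonometric identities, and the reason the rate here ($-3/4=-1/4-1/2$) beats the $-2/3$ rate on $\mathbb Z^2$, where the separable phase only forces two independent fold ($A_2$) degeneracies; and (ii) obtaining the oscillatory integral bound \emph{uniformly} in $v$, which is the real difficulty, since as $v$ moves the critical points drift and can coalesce (the caustic in $v$-space has cusps exactly at the $A_3$ values), so a pointwise application of the method of stationary phase is not enough --- this is precisely what the uniform, perturbation-stable estimates of Section~\ref{sec:2} are designed to supply.
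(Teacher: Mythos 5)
Your argument follows essentially the same route as the paper: reduce via \eqref{formula} and \eqref{final_form} to a uniform-in-$v$ bound for $I(t,v)$, dispose of large $v$ by non-stationary phase, and for $v$ in a compact set combine the singularity analysis of Section \ref{sec:4} (worst case an $A_3$-type point, exponent $-\frac34$) with Karpushkin's perturbation-stable estimates and a finite covering of the parameter set, which is exactly how the paper deduces Lemma \ref{mainlem} from Lemmas \ref{principle}, \ref{lemest} and \ref{singular_estimate}. One correction to your closing remark (i): the system $\nabla_x\phi(v,x)=0$, $\det\mathrm{Hess}_x\phi(v,x)=0$ together with the vanishing of the third derivative along the kernel \emph{does} have common solutions (e.g.\ $x=(\frac{\pi}{2},\frac{\pi}{2})$, $v=(2,2)$), and these are precisely the $A_3$ points that produce the $-\frac34$ rate; what the trigonometric computation must rule out (and what Proposition \ref{calculation} establishes, namely $\Sigma_0\cap\Sigma_1\cap\Sigma_2=\emptyset$) is the \emph{further} degeneracy in which, in addition, the quasi-homogeneous part $\alpha_{2,0}u_1^2+\alpha_{1,2}u_1u_2^2+\alpha_{0,4}u_2^4$ becomes $\mathbb{R}$-degenerate, which would yield a singularity worse than $A_3$ and a decay rate slower than $\langle t\rangle^{-\frac34}$.
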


Then by the theory of Keel and Tao \cite{keel1998endpoint}, we can finally obtain the desired Strichartz estimate.

Let us first introduce the notion of admissible pairs, see \cite[Chapter 2]{tao2006nonlinear}.
\begin{defn}
    Fix $\sigma>0$, we call a pair  $(q,r)$ of exponents $\sigma$-admissible if $2\le q,r \le \infty$, $\frac{1}{q}+\frac{\sigma}{r}=\frac{\sigma}{2}$, and $(q,r,\sigma)\neq(2,\infty,1).$
\end{defn}

With the notion of the admissible pair, we have the following theorem.
\begin{thm}\label{mainthm}
Let $u(v,t)\in C^1([0,+\infty);l^2(\mathbb{Z}^2))$ be a solution to $\eqref{LS}$ on the hexagonal triangulation $\mathcal{T}_H$. Then one has the energy identity
\begin{align}\label{energy}
    \|u(\cdot,t)\|_{l^2}=\|\psi\|_{l^2},
\end{align}
and the sharp decay estimate 
\begin{align}\label{decay}
\|u(\cdot,t)\|_{l^{\infty}}\le C\left\langle t\right\rangle^{-\frac{3}{4}}\|\psi\|_{l^1}.
\end{align}

And for the inhomogeneous discrete Schr\"odinger equation
\[
i\partial_tu(v,t)+\Delta_{G_H}u(v,t)+F(v,t)=0,~~\forall (v,t)\in V\times[0,\infty),
\]
there is the Strichartz estimates with $\sigma$-admissible pairs $(q,r)$ and $(\tilde{q},\tilde{r})$, where $\sigma=\frac{3}{4}$.
\begin{align}\label{strichartz}   \|u\|_{L^ql^r}\le C\left(\|\psi\|_{l^2}+\|F\|_{L^{\tilde{q}'}(0,T)l^{\tilde{r}'}}\right),
\end{align}
where $T\in(0,\infty]$ and $C$ is a constant independent of $T$.
\end{thm}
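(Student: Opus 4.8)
The plan is to establish the three assertions of Theorem~\ref{mainthm} in order, the first two being essentially immediate and the third reducing to the abstract Strichartz machinery of Keel and Tao \cite{keel1998endpoint} applied to the dispersive input of Lemma~\ref{mainlem}.

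For the energy identity \eqref{energy}, I would use that $\mathcal{T}_H$ has uniformly bounded degree $6$, so $\Delta_{G_H}$ is a bounded self-adjoint operator on $l^2(\mathbb{Z}^2)$; under the discrete Fourier transform that produced \eqref{formula} it is multiplication by the real symbol $g(x)$ of \eqref{phasefunction}. Hence $U(t):=e^{it\Delta_{G_H}}$ is a strongly continuous unitary group, $u(\cdot,t)=U(t)\psi$ is the solution to \eqref{LS}, and $\|u(\cdot,t)\|_{l^2}=\|\psi\|_{l^2}$; equivalently $\frac{\mathrm{d}}{\mathrm{d}t}\|u(\cdot,t)\|_{l^2}^2=2\,\mathrm{Re}\,\langle\partial_t u,u\rangle=2\,\mathrm{Re}\,\langle i\Delta_{G_H}u,u\rangle=0$ since $\langle\Delta_{G_H}u,u\rangle$ is real. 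The decay estimate \eqref{decay} is exactly Lemma~\ref{mainlem}. In particular, since $\langle t\rangle^{-3/4}\le|t|^{-3/4}$, we obtain the pointwise-in-time dispersive bound $\|U(t)U(s)^*\|_{l^1\to l^\infty}=\|U(t-s)\|_{l^1\to l^\infty}\le C|t-s|^{-3/4}$, alongside the trivial $\|U(t)U(s)^*\|_{l^2\to l^2}\le 1$.

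For \eqref{strichartz} I would invoke the abstract Strichartz theorem of \cite{keel1998endpoint} with $\sigma=\tfrac34$, underlying measure space $(\mathbb{Z}^2,\#)$, Hilbert space $l^2(\mathbb{Z}^2)$, and the group $U(t)$; its two hypotheses are precisely the energy and dispersive bounds recorded above. Since $\sigma=\tfrac34<1$, every $\sigma$-admissible pair satisfies $q\ge 2/\sigma=\tfrac83>2$, so the excluded endpoint $(q,r,\sigma)=(2,\infty,1)$ never arises and the hard endpoint argument is not needed: the homogeneous bound $\|U(\cdot)\psi\|_{L^ql^r}\le C\|\psi\|_{l^2}$ follows from the classical $TT^*$ argument --- interpolate the $l^2\to l^2$ and $l^1\to l^\infty$ bounds by Riesz--Thorin to get $\|U(t)U(s)^*\|_{l^{r'}\to l^r}\le C|t-s|^{-\sigma(1-2/r)}$, then apply the one-dimensional Hardy--Littlewood--Sobolev inequality in $t$, whose admissible exponents are exactly those with $\tfrac1q+\tfrac\sigma r=\tfrac\sigma2$. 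For the inhomogeneous equation I would use Duhamel's formula $u(t)=U(t)\psi+i\int_0^t U(t-s)F(\cdot,s)\,\mathrm{d}s$ together with the dual of the homogeneous estimate; the passage from the untruncated time integral to the retarded one $\int_{s<t}$ is handled by the Christ--Kiselev lemma, which applies because $q,\tilde q>2$. The constant in \eqref{strichartz} is independent of $T$ because one extends $F$ by zero outside $(0,T)$ and runs the argument on all of $\mathbb{R}$.

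I do not anticipate a genuine obstacle in this last step: the analytic core --- the singularity structure of the phase $\phi$ and the Karpushkin uniform estimates --- is already packaged into Lemma~\ref{mainlem}. Within the proof of Theorem~\ref{mainthm} itself, the only delicate points are bookkeeping ones: matching the paper's notion of $\sigma$-admissibility to the hypotheses of \cite{keel1998endpoint}, checking that the Japanese-bracket decay dominates the $|t-s|^{-\sigma}$ decay required by the abstract theorem, and recording the exponent range for which the Christ--Kiselev lemma is available.
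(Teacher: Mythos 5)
Your proposal is correct and follows essentially the same route as the paper: the energy identity comes from unitarity of $e^{it\Delta_{G_H}}$ (the Laplacian being bounded and self-adjoint on $l^2$), the decay estimate \eqref{decay} is exactly Lemma \ref{mainlem}, and the Strichartz bound \eqref{strichartz} is obtained by feeding these two bounds into the abstract Keel--Tao theorem (Theorem \ref{tao}) with $\sigma=\tfrac{3}{4}$. The extra detail you supply (the $TT^*$/Hardy--Littlewood--Sobolev derivation and the Christ--Kiselev step, which Keel--Tao's retarded estimates in fact already cover) is standard filler that the paper delegates to the citation, so there is no substantive difference.
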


    From Theorem \ref{decay} we see that the decay rate of the discrete Schr\"odinger equation on the hexagonal triangulation is faster than that on $\mathbb{Z}^2$. This phenomenon could be attributed to two possible reasons. Firstly, on the hexagonal triangulation, in addition to edges in the horizontal and vertical directions, there are edges in an additional direction, resulting in a faster decay rate of the solution. Secondly, compared to $\mathbb{Z}^2$, geodesic balls on the hexagonal triangulation are closer to those in Euclidean space. Therefore, the decay rate is more similar to that of the Schr\"odinger equation on a plane.

    It is interesting to consider the estimate of Schr\"odinger equation on other cellular decompositions of $\mathbb{R}^2$, such as the hexagonal tiling.

\section{Uniform estimates of oscillatory integrals}\label{sec:2}
In this section, we introduce some results of uniform estimates of oscillatory integrals, which are used to prove Lemma \ref{mainlem}. The results we use come from the pioneer work of uniform estimates of phase functions with two variables introduced by Karpushkin \cite{MR0778884}.
\begin{defn}
    An oscillatory integral with phase $\phi$ and amplitude $\mathbf{a}$ is an integral given by
    \begin{align}\label{general_integral}
            J(t,\phi,\mathbf{a})=\int_{\mathbb{R}^n}\mathbf{a}(x)e^{it\phi(x)}\mathrm{d}x,
    \end{align}
where $\mathbf{a}\in C_0^{\infty}(\mathbb{R}^n)$ and $t\in\mathbb{R}.$ If the support of $\mathbf{a}$ is sufficiently small which is contained in a small neighborhood of $x=0$, and $\phi$ is an analytic function at the origin, then as $t\rightarrow\infty$, one can obtain the following expansion, see \cite{MR0778884}.
\begin{align}
J(t,\phi,\mathbf{a})\approx\sum_s\sum_{k=0}^{n-1}b_{s,k}(\mathbf{a})t^{s}(\ln t)^k,\label{expansion}
\end{align}
where $s$ is a negative rational number independent of $\mathbf{a}.$
\end{defn}
\begin{defn}
    Let $(\beta(\phi),p(\phi))$ be the maximum over all pairs $(s,k)$ in \eqref{expansion} under the  lexicographic ordering with the following property.
    
    \textbf{For any neighborhood $W$ of $0$, there exists a function $\mathbf{a}(x)\in C_c^{\infty}(W)$, such that $b_{\beta(\phi),p(\phi)}(\mathbf{a})\neq 0.$}
    
     We denote by $O(\phi)=(\beta(\phi),p(\phi))$  the maximal exponential pair.\ $\beta(\phi)$ and $p(\phi)$ are called the oscillatory index of $\phi$ and the multiplicity of the oscillation of $\phi$
at $0$, see \cite{newtonestimate,kar_paro1}.
\end{defn}
Now we assume that $f:\mathbb{R}^n\rightarrow\mathbb{R}$ is analytic at the point $x_0.$ For a positive number $r>0$, we denote by $H_{r,x_0}$ the set of real analytic functions on $B(x_0,r)$ which has a unique holomorphic extension on the ball $B_{\mathbb{C}^n}(x_0,r).$ 

By $H_{r,x_0}(\epsilon)$, we denote the set given by
\[
H_{r,x_0}(\epsilon)=\{f\in H_{r,x_0}:|f|\le \epsilon.\}
\]
\begin{defn}
    An oscillatory integral with phase $\phi$ is said to have a uniform estimate at the point $x_0$ with exponent $(\beta,p)$
     if for any sufficiently small number $r>0$, there exists $\epsilon=\epsilon(r)>0$, $C=C(r)>0$ and a neighborhood $W\subset B(x_0,r)$ such that 
     \[
     J(t,\phi+P,\mathbf{a})\le C\left\langle t\right\rangle^{\beta}\ln^p(|t|+2)\|\mathbf{a}\|_{C^N(W)},~~\forall t\in \mathbb{R}.
     \]
     given that $\mathbf{a}\in C_c^{\infty}(W) $ and $P\in H_{r,x_0}(\epsilon).$
     And the constant $N$ only depends on the dimension $n.$
     For simplicity, we denote by  
     \[
     M(\phi,x_0)\curlyeqprec(\beta,p),
     \]
      that $\phi$ has a uniform estimate at the point $x_0$ with exponent $(\beta,p).$
\end{defn}
In \cite{MR0778884}, it was proved that for the function $\phi:\mathbb{R}^n\rightarrow\mathbb{R}$ is analytic at $x_0\in\mathbb{R}^n$ with $|\nabla\phi(x_0)|=0$, and $\mathrm{Hess}\phi(x_0)$ has corank 2, then one has $M(\phi,x_0)\curlyeqprec O(\phi).$ In particular, when $n=2$, the following uniform estimates hold.  
 \[
M(\phi,x_0)\curlyeqprec O(\phi_{x_0}),
\]
where $\phi_{x_0}=\phi(x+x_0).$
\begin{thm}[\cite{MR0778884}]
	Let $f:\mathbb{R}^2\rightarrow\mathbb{R}$ be a function that analytic at $0,$ with $\nabla f(0)=0$ and $\mathrm{det}(\mathrm{Hess}f(0))=0.$ Then 
	\[
	M(f,0)\curlyeqprec O(\phi).
	\]
\end{thm}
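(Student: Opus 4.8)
The plan is to follow Karpushkin's proof \cite{MR0778884}, whose three ingredients are the classification of analytic function germs of two variables, Varchenko's determination of the oscillatory pair from the Newton polyhedron, and a scaling/deformation argument that promotes the pointwise decay estimate to a uniform one. Note at the outset that the hypothesis $\det(\mathrm{Hess}\,f(0))=0$ only forces $\operatorname{corank}\mathrm{Hess}\,f(0)\in\{1,2\}$; the corank-$2$ case is exactly the statement quoted just above from \cite{MR0778884}, so strictly speaking only the corank-$1$ case is new, but since the hard work lies in the corank-$2$ analysis I would sketch the whole thing.

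First I would reduce. We may take $f(0)=0$, a constant in the phase contributing only a unimodular factor. If $0$ is not isolated in the critical set of $f$ one argues directly (after straightening the critical curve, $f$ becomes in suitable coordinates essentially a monomial times a unit, or $f\equiv0$, and one reduces to the one-variable theory by a Fubini-type argument); the substance is the case where $0$ is an isolated critical point of finite multiplicity $\mu=\dim_{\mathbb C}\mathcal{O}_2/(\partial_1 f,\partial_2 f)$. By finite determinacy $f$ is then analytically equivalent to one of Arnold's normal forms --- the simple $A_k, D_k, E_6, E_7, E_8$, the parabolic $\tilde E_6,\tilde E_7,\tilde E_8$, the hyperbolic $T_{p,q,r}$, and the exceptional families --- with the splitting lemma reducing the corank-$1$ cases to the $A_k$ forms. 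For each normal form the Newton polyhedron in adapted coordinates is explicit, so Varchenko's theorem yields $O(f)=(\beta(f),p(f))$ and the non-uniform bound $J(t,f,\mathbf{a})\lesssim\langle t\rangle^{\beta(f)}\ln^{p(f)}(|t|+2)\,\|\mathbf{a}\|_{C^N}$.

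Then I would prove the uniform estimate by induction on $\mu$, the base case $\mu=1$ being ordinary stationary phase. Fix a normal form and a small $r$. For $P\in H_{r,0}(\epsilon)$ with $\epsilon$ small, Cauchy estimates bound the low-order derivatives of $P$ on a slightly smaller ball by $C\epsilon$, so $f+P$ has exactly $\mu$ critical points (counted with multiplicity over $\mathbb C$) clustered near $0$. The fact that makes the argument close up --- and the reason it is special to two variables --- is the semicontinuity of the oscillatory pair: each critical point of $f+P$ is one into which $f$ degenerates, hence lies no higher in Arnold's adjacency order and has oscillatory pair lexicographically $\le O(f)$, i.e.\ decays at least as fast. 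One then covers the $x$-domain by finitely many dyadic annuli centred at $0$ and at the perturbed critical points, with radii chosen as functions of $|t|$ and $\epsilon$: on the innermost region one uses the central estimate for the normal form, on the outer regions van der Corput and repeated integration by parts (each step consuming a bounded number of derivatives of $\mathbf{a}$, which is how $N=N(2)$ enters), and on the intermediate regions the uniform estimate for the adjacent, lower-multiplicity phase supplied by the induction hypothesis. Summing the finitely many contributions gives $J(t,f+P,\mathbf{a})\le C\langle t\rangle^{\beta(f)}\ln^{p(f)}(|t|+2)\,\|\mathbf{a}\|_{C^N}$ with $C$ independent of $P$, that is $M(f,0)\curlyeqprec O(f)$.

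I expect the real obstacle to be precisely the uniformity of $C$ as the perturbation degenerates. When several perturbed critical points collide, the constants produced by stationary phase at each of them blow up, and the whole game is to show that this blow-up is always dominated by the extra decay $\langle t\rangle^{\beta(f)}$ coming from the slower but more degenerate central estimate; this is carried out by a case analysis on the relative sizes of $|t|$, $\epsilon$ and the mutual distances of the critical points, using the stability of the Newton polyhedron under the deformation and the finiteness of the adjacency diagrams of two-variable singularities. It is exactly this balancing that has no analogue in dimension $n\ge3$, where Varchenko exhibited phases admitting no uniform estimate with the oscillatory index, so any correct proof must genuinely exploit the two-dimensional structure --- here, the explicit combinatorics of the $A$, $D$, $E$ and exceptional singularities of surfaces.
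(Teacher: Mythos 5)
You should first be aware that the paper does not prove this statement at all: it is quoted verbatim from Karpushkin \cite{MR0778884} as an external input, so there is no in-paper argument to compare against, and your proposal is in effect an attempt to reconstruct Karpushkin's proof. As a road map it names the right circle of ideas (reduction to an isolated critical point of finite multiplicity, Arnold's two-variable normal forms via the splitting lemma, Varchenko's Newton-polyhedron computation of the pair $(\beta(f),p(f))$, and a deformation argument with induction on the multiplicity $\mu$). But as a proof it has a genuine gap, and it is the gap: the entire content of the theorem is the uniformity of the constant as the perturbation $P\in H_{r,0}(\epsilon)$ makes critical points collide, and this is exactly the step you defer (``I expect the real obstacle to be precisely the uniformity of $C$\dots''). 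Saying that a case analysis on $|t|$, $\epsilon$ and the mutual distances of the critical points ``must balance'' the blow-up of the local stationary-phase constants against the central decay $\langle t\rangle^{\beta(f)}\ln^{p(f)}(|t|+2)$ states the difficulty; nothing in the sketch actually carries out that balancing, so the proposal does not yet establish $M(f,0)\curlyeqprec O(f)$.

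Two further points. First, the ``semicontinuity of the oscillatory pair'' that you invoke to control the perturbed critical points is not an innocuous known fact: in two variables it is essentially a corollary of the very uniform-estimate theorem being proved (Arnold's semicontinuity conjecture, false in higher dimensions, is obtained in dimension two from Karpushkin's result). To avoid circularity you would have to verify, case by case from the adjacency tables of the two-variable normal forms, that every singularity adjacent to $f$ has a lexicographically smaller or equal pair; that verification is a substantial part of the actual proof and is absent here. Second, your covering of the $x$-domain by dyadic annuli around the perturbed critical points is not Karpushkin's route: his argument passes through versal deformations, reducing the arbitrary analytic perturbation to a finite-dimensional family attached to each normal form and proving the uniform bound there, which is precisely how the collision regime is tamed. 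Finally, a small reading note: the hypothesis $\det(\mathrm{Hess}f(0))=0$ in two variables allows corank $1$ or $2$ (your splitting-lemma reduction of the corank-$1$ case to $A_k$ is correct), and the conclusion in the paper's statement should read $O(f)$; the $O(\phi)$ there is a typo.
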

And by the method of stationary phase, one has the following lemma.
\begin{lem}\label{dimensional_reduction}
	Let $m,n\ge1$, and 
	\[
	h(\xi,y)=h_0(\xi)+P(y),~\forall (\xi,y)\in \mathbb{R}^n\times\mathbb{R}^m,
	\]
	where $P(y)=\sum_{j=1}^mc_jy_j^2$ with $c_j\neq 0.$ If $M(h_0,0)\curlyeqprec (\beta,p)$, then we have
	\[
	M(h)\curlyeqprec (\beta-\frac{m}{2},p).
	\]
\end{lem}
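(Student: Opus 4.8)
The plan is to establish the case $m=1$ and then iterate. So take $h(\xi,y)=h_0(\xi)+cy^2$ with $c\neq 0$ and $y\in\mathbb{R}$, fix a small $r>0$, let $P\in H_{r,0}(\epsilon)$ (in the $n+1$ variables $(\xi,y)$), and set $\tilde h:=h+P$. Since $\partial_y^2(cy^2)=2c\neq 0$ and Cauchy estimates bound $\partial_y^2 P$ on a fixed complex neighborhood of the origin by $O(\epsilon)$, for $\epsilon$ small $\partial_y^2\tilde h$ stays bounded away from $0$ there; by the holomorphic implicit function theorem the equation $\partial_y\tilde h(\xi,y)=0$ then has a unique solution $y=\psi(\xi)$, analytic on a ball $B_{\mathbb{C}^n}(0,r_*)$ with $r_*$ a fixed fraction of $r$, and $\|\psi\|_\infty=O(\epsilon)$. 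Taylor's formula with integral remainder gives $\tilde h(\xi,y)-\tilde h(\xi,\psi(\xi))=(y-\psi(\xi))^2q(\xi,y)$ with $q$ analytic and $q(0,0)$ close to $c$, so the substitution $z=(y-\psi(\xi))\,|q(\xi,y)|^{1/2}$ is, for each $\xi$ near $0$, an analytic diffeomorphism in $y$ with positive derivative, depending analytically on $\xi$; this parametric Morse lemma brings $\tilde h$ to the normal form
\[
\tilde h\big(\xi,y(\xi,z)\big)=g(\xi)+\mathrm{sgn}(c)\,z^2,\qquad g(\xi):=h_0(\xi)+\underbrace{c\,\psi(\xi)^2+P(\xi,\psi(\xi))}_{=:\,P_1(\xi)},
\]
and all derivatives of $(\xi,y)\mapsto(\xi,z)$ and of its inverse $(\xi,z)\mapsto(\xi,y(\xi,z))$ are bounded uniformly over $P\in H_{r,0}(\epsilon)$.

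Because $\|\psi\|_\infty$ and $\|P\|_\infty$ are $O(\epsilon)$ on a fixed complex neighborhood, $P_1$ extends holomorphically to $B_{\mathbb{C}^n}(0,r_*)$ with $\|P_1\|_\infty\to 0$ as $\epsilon\to 0$; hence, decreasing $\epsilon$ if necessary, $P_1\in H_{r_*,0}(\epsilon_*)$, where $\epsilon_*=\epsilon_*(r_*)$, $C_*=C_*(r_*)$ and the neighborhood $W_*\subset B_{\mathbb{R}^n}(0,r_*)$ are those furnished by the hypothesis $M(h_0,0)\curlyeqprec(\beta,p)$ at radius $r_*$. Now choose the amplitude $\mathbf{a}$ supported in a neighborhood $W$ of the origin so small that the Morse change of variables is a diffeomorphism on $\mathrm{supp}\,\mathbf{a}$ and the projection of $W$ to $\xi$-space lies in $W_*$. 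Changing variables in $J(t,\tilde h,\mathbf{a})=\int_{\mathbb{R}^{n+1}}\mathbf{a}\,e^{it\tilde h}$ and writing $b(\xi,z):=\mathbf{a}(\xi,y(\xi,z))\,|\partial_z y(\xi,z)|\in C_c^\infty(\mathbb{R}^{n+1})$, which satisfies $\|b\|_{C^M}\le C\|\mathbf{a}\|_{C^M(W)}$ with $C$ independent of $P$, one obtains
\[
J(t,\tilde h,\mathbf{a})=\int_{\mathbb{R}^n}e^{itg(\xi)}\left(\int_{\mathbb{R}}b(\xi,z)\,e^{it\,\mathrm{sgn}(c)\,z^2}\,\mathrm{d}z\right)\mathrm{d}\xi.
\]

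Next I would apply, to the inner integral, the standard one-dimensional stationary-phase expansion of a Gaussian integral with remainder: for $|t|\ge 1$,
\[
\int_{\mathbb{R}}b(\xi,z)\,e^{it\,\mathrm{sgn}(c)\,z^2}\,\mathrm{d}z=\sum_{k=0}^{K}\gamma_k\,\big(t\,\mathrm{sgn}(c)\big)^{-\frac12-k}\,\partial_z^{2k}b(\xi,0)+R_K(\xi,t),\qquad |R_K(\xi,t)|\le C_K\,|t|^{-K-\frac32}\,\|b(\xi,\cdot)\|_{C^{2K+2}},
\]
with universal constants $\gamma_k$ and the remainder bound uniform in $\xi$. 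Fixing $K$ large enough (depending only on $\beta$), substituting, and integrating in $\xi$ gives the exact identity
\[
J(t,\tilde h,\mathbf{a})=\sum_{k=0}^{K}\gamma_k\,\big(t\,\mathrm{sgn}(c)\big)^{-\frac12-k}\,J\big(t,\,h_0+P_1,\,\partial_z^{2k}b(\cdot,0)\big)+\int_{\mathbb{R}^n}e^{itg(\xi)}R_K(\xi,t)\,\mathrm{d}\xi,
\]
valid since $g=h_0+P_1$. Each amplitude $\partial_z^{2k}b(\cdot,0)$ is supported in $W_*$ with $\|\partial_z^{2k}b(\cdot,0)\|_{C^N}\le C\|\mathbf{a}\|_{C^{N+2k}(W)}$, so $M(h_0,0)\curlyeqprec(\beta,p)$ bounds the $k$-th summand by $C\,|t|^{-\frac12-k}\langle t\rangle^{\beta}\ln^p(|t|+2)\,\|\mathbf{a}\|_{C^{N+2k}(W)}$, dominated by the $k=0$ term which is $\le C\langle t\rangle^{\beta-\frac12}\ln^p(|t|+2)\|\mathbf{a}\|$, while the remainder integral is at most $\|R_K(\cdot,t)\|_{L^1_\xi}\le C|t|^{-K-\frac32}\|\mathbf{a}\|_{C^{2K+2}(W)}\le C\langle t\rangle^{\beta-\frac12}\|\mathbf{a}\|$ once $K$ is large. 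Handling the trivial range $|t|\le 1$ by the crude bound $|J|\le\|\mathbf{a}\|_{L^1}$ then gives $M(h,0)\curlyeqprec(\beta-\tfrac12,p)$. The general statement follows by induction on $m$: applying the case $m=1$ with $h_0$ replaced by $h_0(\xi)+\sum_{j<m}c_jy_j^2$ (whose uniform exponent is $(\beta-\tfrac{m-1}{2},p)$ by the inductive hypothesis) peels off the square $c_my_m^2$ and yields $M(h,0)\curlyeqprec(\beta-\tfrac m2,p)$.

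The step I expect to be the real obstacle is not the stationary-phase computation but guaranteeing uniformity: one must set up the parametric Morse lemma so that the normalizing change of variables and all the attendant constants---the smallness threshold $\epsilon$, the radius $r_*$, and the amplitude-norm constants---are chosen uniformly over the whole family of admissible perturbations $P\in H_{r,0}(\epsilon)$, and one must verify that the induced base perturbation $P_1$ indeed lands in the class $H_{r_*,0}(\epsilon_*)$ dictated by the hypothesis for $h_0$. This forces a careful nesting of the choices of radii and smallness parameters, although each individual ingredient (holomorphic implicit function theorem, Morse lemma with analytic parameters, one-dimensional stationary phase with remainder) is classical.
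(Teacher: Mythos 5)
The paper gives no proof of this lemma---it is stated as a known consequence of the method of stationary phase (essentially Karpushkin's dimensional-reduction result)---and your argument carries out exactly that route: a uniform parametric Morse reduction in the quadratic variable, one-dimensional stationary phase with remainder, the key verification that the induced base perturbation $P_1=c\psi^2+P(\cdot,\psi(\cdot))$ lies in the admissible class $H_{r_*,0}(\epsilon_*)$ for $h_0$, and induction on $m$. The proof is correct; the only cosmetic caveats are the unimodular phase constants hidden in $(t\,\mathrm{sgn}(c))^{-1/2-k}$ and the fact that your derivative count $N+2K$ depends on $\beta$ through the choice of $K$, which is harmless in the present setting where the relevant oscillation indices satisfy $\beta\ge-1$.
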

\section{Newton Polyhedra}\label{sec:3}
In this section, we introduce some basics of Newton polyhedra, see \cite{MR1484770}. Let $\phi$ be a function on $\mathbb{R}^n$ which is real analytic at $0$. Assume that
\begin{align}
	\phi(0)=|\nabla\phi(0)|=0, \label{assumption}
\end{align}
and the Taylor series of $\phi$ at $0$ is
\begin{align*}
	\phi(x)=\sum_{\alpha\in \mathbb{Z}^n}a_{\alpha}x^{\alpha},
\end{align*}
set $\mathcal{T}(\phi):=\{\alpha\in\mathbb{Z}^n:a_{\alpha}\neq 0\}$, which is called the Taylor support of $\phi$ at $0$. The \textbf{Newton polyhedron} $N(\phi)$ is the convex hull of the set
\begin{align*}
	\cup_{\alpha\in \mathcal{T}(\phi)}(\alpha+(\mathbb{R}_+)^n).
\end{align*}
\begin{defn}\label{non_degenerate}
Let $\mathcal{P}$ be a face of $\mathcal{N}(\phi).$ We denote by $\phi_{\mathcal{P}}(x)$  the function defined as
\begin{align*}
\phi_{\mathcal{P}}(x)=\sum_{\alpha\in \mathcal{P}}a_\alpha x^\alpha.
\end{align*}
We call $\phi$ an $\mathbb{R}$-nondegenerate function if for any compact face $\mathcal{P}$ of $\mathcal{N}(\phi)$, the following relationship holds.
\begin{align}\label{Rdegenerate}
    \cap_{i=1}^n\{x:\partial_i \phi_{\mathcal{P}}(x)=0\}\subset\cup_{i=1}^n\{x:x_i=0\}.
\end{align}
Otherwise, we call $\phi$ is $\mathbb{R}$-degenerate.
\end{defn}
The \textbf{Newton distance} $d_{\phi}$ is given by
\begin{align*}
	d_{\phi}=\inf\{\rho>0:(\rho,...,\rho)\in \mathcal{N}(\phi)\}.
\end{align*}
Set $\textbf{d}_\phi=(d_\phi,...,d_\phi)$. Let $F_\phi$ be the principal face of $\mathcal{N}_{\phi}$, which is the face of minimal dimension containing $\textbf{d}_\phi.$ Set $k_\phi=n-\mathrm{dim}(F_\phi).$ 
Then Varchenko proved the following result in \cite{newtonestimate}.
\begin{thm}\label{newton_lemma}
	Let $\phi$ be $\mathbb{R}-nondegenerate$ and $\phi(0)=|\nabla\phi(0)|=0.$ Assume that $(\beta(\phi),p(\phi))$ is the maximal exponential pair. Then one has
	\begin{align*}
		\beta(\phi)\le -\frac{1}{d_\phi},~p(\phi)\le k_\phi-1.
	\end{align*}
\end{thm}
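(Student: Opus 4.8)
The plan is to establish the bound on the maximal exponential pair $(\beta(\phi),p(\phi))$ by producing, for every amplitude $\mathbf a$ supported sufficiently close to $0$, an upper estimate on the oscillatory integral $J(t,\phi,\mathbf a)$ of the form $C\langle t\rangle^{-1/d_\phi}\ln^{k_\phi-1}(|t|+2)$, and then reading off the exponents from the asymptotic expansion \eqref{expansion}. The standard route, following Varchenko \cite{newtonestimate}, is to decompose the region of integration adapted to the Newton polyhedron $\mathcal N(\phi)$ and to use the $\mathbb R$-nondegeneracy hypothesis to control each piece.

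First I would set up the dyadic/quasi-homogeneous decomposition. Choose a collection of weights $\kappa=(\kappa_1,\dots,\kappa_n)$ supporting the faces of $\mathcal N(\phi)$; the supporting weight of the principal face $F_\phi$ satisfies $\sum_i\kappa_i = 1/d_\phi$ after normalization. Partition a neighborhood of $0$ into "$\kappa$-quasi-homogeneous" shells $U_j = \{x : 2^{-j-1}\le \rho_\kappa(x)\le 2^{-j}\}$ where $\rho_\kappa$ is the associated quasi-norm, together with lower-dimensional coordinate subregions near the coordinate hyperplanes $\{x_i = 0\}$ (handled by induction on $n$). On each shell, rescale $x = 2^{-j}\cdot_\kappa y$ so that $\phi$ becomes, to leading order, the quasi-homogeneous polynomial $\phi_{\mathcal P}$ attached to the relevant face $\mathcal P$, plus a lower-order remainder; the Jacobian of the rescaling contributes a factor $2^{-j\sum_i\kappa_i}$, and the large parameter becomes $t\, 2^{-j\,m(\mathcal P)}$ where $m(\mathcal P)$ is the weighted degree of $\phi_{\mathcal P}$.

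The key step is the estimate on a single rescaled shell. Here $\mathbb R$-nondegeneracy enters: by \eqref{Rdegenerate}, the common zero set of the partials of $\phi_{\mathcal P}$ meets the torus $\rho_\kappa = 1$ only at points where some $x_i = 0$, so away from neighborhoods of the coordinate hyperplanes one has $|\nabla\phi_{\mathcal P}|\gtrsim 1$ and can integrate by parts to gain arbitrary decay in $t\,2^{-j\,m(\mathcal P)}$; near the coordinate hyperplanes one recurses to lower dimension. This yields a per-shell bound of the order $2^{-j\sum_i\kappa_i}\min\!\big(1,(t\,2^{-j\,m(\mathcal P)})^{-M}\big)$ for any $M$, whose geometric-type sum over $j$ produces exactly $\langle t\rangle^{-\min_i(\sum_i\kappa_i)/m} = \langle t\rangle^{-1/d_\phi}$, with a logarithmic loss of order $\ln^{k_\phi - 1}(|t|+2)$ arising precisely when several faces share the optimal ratio, i.e. when the principal face has positive codimension minus one, $k_\phi - 1$. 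Comparing with \eqref{expansion} then gives $\beta(\phi)\le -1/d_\phi$ and $p(\phi)\le k_\phi - 1$.

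I expect the main obstacle to be the bookkeeping of the induction near the coordinate hyperplanes: one must argue that the restriction of $\phi$ (or of the relevant face polynomial) to a coordinate subspace is again $\mathbb R$-nondegenerate with Newton distance no larger than $d_\phi$, so that the inductive hypothesis applies and the contributions from those lower-dimensional strata do not dominate. Making the shell decomposition and the rescaling uniform — so that the remainder terms are genuinely lower order on every shell simultaneously — is the other delicate point; this is where the choice of an adapted set of weights $\kappa$ (a finite set exhausting all faces of $\mathcal N(\phi)$) is essential. Once these are in place, the summation over scales is routine and the stated bounds follow.
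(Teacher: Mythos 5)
The paper does not prove this statement at all: it is Varchenko's theorem, quoted from \cite{newtonestimate} (see also the discussion in \cite{MR0778884}), so there is no internal proof to compare yours against, and only the two-dimensional case is actually used later. Your outline is the recognizable elementary route to Varchenko's bound (a quasi-homogeneous dyadic decomposition adapted to $\mathcal{N}(\phi)$, rescaling each shell so a face polynomial dominates, integration by parts using $\mathbb{R}$-nondegeneracy away from the coordinate hyperplanes, then summation over scales), whereas Varchenko's own argument runs through a toric resolution of singularities adapted to the dual fan of the Newton polyhedron, reducing the phase to normal-crossings form and reading off the exponents there. Either route can work, so the issue is not the choice of strategy.

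The genuine gap is the step you yourself flag near the coordinate hyperplanes, and it is more than bookkeeping: the inductive statement you propose --- that the restriction of $\phi$ (or of the face polynomial $\phi_{\mathcal{P}}$) to a coordinate subspace is again $\mathbb{R}$-nondegenerate with Newton distance at most $d_\phi$ --- is false in general. Nondegeneracy is not inherited by restrictions, the restricted Newton distance can exceed $d_\phi$ (already $\phi(x_1,x_2)=x_1^2+x_1x_2^2+x_2^{100}$ has $d_\phi=\tfrac43$ while the restriction to $\{x_2=0\}$ has distance $2$), and the restriction may even vanish identically. The correct proofs do not recurse on a lower-dimensional instance of the same theorem: they keep all variables and iterate the decomposition in the remaining coordinates (or let the resolution handle these strata), with the gain coming from the supporting weights of the faces rather than from a restricted phase. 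Two further points need to be made precise rather than asserted: the logarithmic power $k_\phi-1$ must be extracted from the number of supporting weights (facets) through $\mathbf{d}_\phi$, i.e.\ the codimension of the principal face, not just from ``several faces sharing the optimal ratio''; and a uniform bound $|J(t,\phi,\mathbf{a})|\le C\langle t\rangle^{-1/d_\phi}\ln^{k_\phi-1}(|t|+2)$ controls the maximal pair only in the lexicographic order, which yields $\beta(\phi)\le-\frac{1}{d_\phi}$ outright but $p(\phi)\le k_\phi-1$ only in the extremal case $\beta(\phi)=-\frac{1}{d_\phi}$; this final comparison with the expansion \eqref{expansion} should be spelled out. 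As written, the proposal is a plausible plan whose hardest steps remain unproved.
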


\section{Singularities of the phase function and the  $l^1\rightarrow l^\infty$ estimate}\label{sec:4}
Now let us analyze the oscillatory integral $I(t,v)$ defined in \eqref{final_form}. Since the phase function $\phi(v,x)=g(x)-\left\langle v,x\right\rangle$, and 
\begin{align*}
\nabla g(x)&=(2\sin(x_1)+2\sin(x_1+x_2),2\sin(x_2)+2\sin(x_1+x_2))^t,
\end{align*}
it is easy to see that $\|\nabla g(x)\|\le 4\sqrt{2}.$
Then when $\|v\|=\|\frac{l}{t}\|\ge 4\sqrt{2}+1$, we have $\|\nabla\phi(v,x)\|\ge 1$. Therefore, with the help of the method of stationary phase, see \cite{stein1993harmonic}, we can easily obtain that the oscillatory integral $I(v,t)$ decays faster than any algebraic function of $t$ uniformly. So it is sufficient to consider the decay estimate when the parameter $v$ is contained inside the ball $B(0,4\sqrt{2}+1).$

Let us first consider the Hessian of the phase function $\mathrm{Hess}_x\phi(v,x)$. It is simply to verify that
\begin{align}\label{hess}
\mathrm{Hess}_x\phi(v,x)=2\begin{pmatrix}
    \cos(x_1+x_2)+\cos x_1&\cos(x_1+x_2)\\
    \cos(x_1+x_2)&\cos(x_1+x_2)+\cos{x_2}
    \end{pmatrix}.
\end{align}
We observe that the corank of the Hessian of $\phi(v,x)$ is at most $1$. Otherwise, we have $\cos(x_1+x_2),\cos{x_1}$ and $\cos{x_2}$ are all equal to $0$. Therefore, $x_i=k_i\pi+\frac{\pi}{2}$, for some integers $k_i,~i=1,2$. Then $x_1+x_2=(k_1+k_2+1)\pi$, which leads to a contradiction.

Since the Hessian of $\phi(v,x)$ is independent of the parameter $v$, we can define a set $\Gamma$ consisting of points $x\in\mathrm{spt}(\mathbf{a})$ where $\mathrm{Hess}_x\phi(v,x)$ degenerate, where $\mathrm{spt}(\mathbf{a})$ is the support of function $a$ in the definition \eqref{final_form}.
\begin{align*}
    \Gamma=\{x\in\mathrm{spt}(\mathbf{a}):\mathrm{det}(\mathrm{Hess}_x)=0\}.
\end{align*}
For simplicity, from now on we denote by $\Omega$ the support of the function $\mathbf{a}$. 
For each $v\in \nabla g(\Gamma),$ there exists a set $$\Gamma(v)=\{x\in\Omega:\nabla_x \phi(v,x)=0,\mathrm{det}(\mathrm{Hess}_x\phi(v,x))=0.\}$$
It is easy to see that for each fixed $v\in\nabla g(\Gamma)$, $\Gamma(v)$ is a nonempty compact set. Then by the argument in Section \ref{sec:2}, for each $\xi\in \Gamma(v)$, there exists an exponential pair $(\beta(\phi(v,\cdot),\xi),p(\phi(v,\cdot),\xi))$ such that
\[
M(\phi(v,\cdot),\xi)\curlyeqprec(\beta(\phi(v,\cdot),\xi),p(\phi(v,\cdot),\xi)).
\]
Then by the definition of uniform estimates for oscillatory integrals, for each $v\in \nabla g(\Gamma)$, there exists an exponential pair $(\beta(v),p(v))$ and positive constants $\epsilon(v)$ and $C(v)$ such that 
\[
I(v',t)\le C(v)\left\langle t\right\rangle^{\beta(v)}\log|t|^{p(v)},~~\forall v'\in B(v,\epsilon(v)).
\]
Since $\nabla g(\Gamma)$ is a compact subset of $\mathbb{R}^2,$ and $\cup_vB(v,\epsilon(v))\supset\nabla g(\Gamma)$, there exists a finite collection $\{v_i\}_{i=1}^N\subset \nabla g(\Gamma)$ such that  
\[
\cup_{i=1}^NB(v_i,\epsilon(v))\supset\nabla g(\Gamma).
\]
Therefore one can get a uniform estimate of $I(v,t)$ when $v\in \cup_{i=1}^NB(v_i,\epsilon(v))$. For rest the parameters $v\in B(0,4\sqrt{2}+1)\backslash\cup_{i=1}^NB(v_i,\epsilon(v))$, it is easy to find that $\mathrm{det}(\mathrm{Hess}_x)\phi(v,x)\ge C_0$ for some positive constant $C_0,$ therefore by the stationary phase method, one can obtain that 
\[
I(v,t)\le C_1\left\langle t\right\rangle^{-1},~~\forall v\in B(0,4\sqrt{2}+1)\backslash\cup_{i=1}^NB(v_i,\epsilon(v)),
\]
where $C_1$ is a positive constant independent of $v.$

Before analyzing singularities of the phase function $\phi(v,x)$, let us first introduce a useful lemma. We call $\alpha\in\mathbb{R}^n$ a \textbf{weight} if $\alpha$ is a vector that consists of positive elements. By $k^\alpha x$ we denote $(k^{\alpha_1}x_1,...,k^{\alpha_n}x_n)$ for every $k>0$ and $x\in \mathbb{R}^n.$
\begin{defn}\label{homogeneous}
For a polynomial $h:\mathbb{R}^n\rightarrow\mathbb{R}$, we call $h$ \textbf{quasi-homogenous} of degree $\rho$ with respect to the weight $\alpha$, if and only if $h(k^{\alpha} x)=k^\rho h(x).$ Let $\mathcal{E}_{\alpha,n}$ be the sets of all quasi-homogenous polynomials of degree $1$ with respect to $\alpha$. And by $\mathcal{H}_{\alpha,n}$ we denote the set of functions which is real-analytic at the origin, whose Taylor series consists of monomials that are quasi-homogeneous of degree greater than $1$ with respect to $\alpha.$ 
\end{defn}
Given a positive weight $\alpha$ and a function $h$ that is real-analytic at the origin, then the following lemma, which was first proved in \cite{kar_paro}, is crucial for our analysis.
\begin{lem} \label{principle}
    Let $h:\mathbb{R}^n\rightarrow\mathbb{R}$ be a function which is real-analytic at the origin. Assuming that there exists a polynomial $h_0(x)\in \mathcal{E}_{\alpha,n}$ and an analytic function $h_1(x)\in \mathcal{H}_{\alpha,n}$ such that
    \[
    h(x)=h_0(x)+h_1(x),
    \]
    then $M(h_0,0)
    \curlyeqprec(\beta,p)$ implies $M(h,0)\curlyeqprec(\beta,p).$ 
\end{lem}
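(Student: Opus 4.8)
The statement to prove is Lemma~\ref{principle}: if $h = h_0 + h_1$ with $h_0 \in \mathcal E_{\alpha,n}$ quasi-homogeneous of degree $1$ and $h_1 \in \mathcal H_{\alpha,n}$ consisting of monomials quasi-homogeneous of degree $>1$, then a uniform estimate for $h_0$ at the origin transfers to $h$. The natural strategy is a parabolic rescaling adapted to the weight $\alpha$, combined with the stability built into the definition of uniform estimate (the perturbation term $P \in H_{r,x_0}(\epsilon)$). First I would fix a small $r>0$ and take $\epsilon = \epsilon(r)$, $C=C(r)$, and the neighborhood $W$ supplied by $M(h_0,0)\curlyeqprec(\beta,p)$. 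For a scaling parameter $k>0$, introduce the change of variables $x = k^{-\alpha}y$, i.e. $y_j = k^{\alpha_j} x_j$. Under this substitution $h_0(k^{-\alpha}y) = k^{-1}h_0(y)$ exactly, while $h_1(k^{-\alpha}y) = \sum k^{-\rho_\nu} c_\nu y^{\nu}$ with all $\rho_\nu > 1$; hence $t\, h(k^{-\alpha}y) = (t k^{-1})\bigl(h_0(y) + \sum_\nu k^{1-\rho_\nu} c_\nu y^\nu\bigr)$. Writing $s = t k^{-1}$ and $P_k(y) = \sum_\nu k^{1-\rho_\nu} c_\nu y^\nu$, the exponents $1-\rho_\nu$ are all negative, so for $k \geq 1$ each coefficient is bounded by its value at $k=1$, and on a fixed ball the holomorphic norm of $P_k$ tends to $0$ as $k\to\infty$; in particular $P_k \in H_{r,0}(\epsilon)$ once $k$ is large enough depending only on $r$ and $\epsilon$.

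The second step is to choose $k$ as a function of $t$ to exploit this. Given $t$ with $|t|\ge 1$, set $k = |t|^{\theta}$ for a suitable $\theta \in (0,1)$ — concretely one wants $k$ large enough that $P_k \in H_{r,0}(\epsilon)$, so any fixed $\theta>0$ works for $|t|$ large, and small $|t|$ is handled trivially since the integral is bounded by $\|\mathbf a\|_{L^1}$. Then
\[
J(t,h,\mathbf a) = \int_{\mathbb R^n} \mathbf a(x) e^{it h(x)}\,\mathrm dx = k^{-|\alpha|_1}\int_{\mathbb R^n} \mathbf a(k^{-\alpha}y)\, e^{i s (h_0(y)+P_k(y))}\,\mathrm dy,
\]
where $|\alpha|_1 = \sum_j \alpha_j$ and $s = t/k$. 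The rescaled amplitude $\tilde{\mathbf a}_k(y) := \mathbf a(k^{-\alpha}y)$ has support shrinking into $W$ for $k$ large (since $\mathbf a \in C_c^\infty$ near $0$), and its $C^N$ norm grows only polynomially in $k$. Applying the uniform estimate for $h_0$ with perturbation $P_k$ gives
\[
\Bigl| J(t,h,\mathbf a)\Bigr| \le C\, k^{-|\alpha|_1}\, \langle s\rangle^{\beta}\, \log^p(|s|+2)\, \|\tilde{\mathbf a}_k\|_{C^N(W)} \lesssim k^{-|\alpha|_1}\, \langle t/k\rangle^{\beta}\, \log^p(|t|+2)\, k^{M},
\]
for some fixed $M=M(N,\alpha,n)$.

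The third step is to optimize, or rather to observe that the weight is chosen so that no optimization is needed beyond a bookkeeping check. Since $h_0 \in \mathcal E_{\alpha,n}$ is quasi-homogeneous of degree $1$, its oscillatory index already satisfies $\beta = \beta(h_0) = -|\alpha|_1$ by the known formula for quasi-homogeneous phases (the oscillation index of a phase quasi-homogeneous of degree $1$ with weight $\alpha$ is $-\sum_j \alpha_j$ — or at least $\beta \le -|\alpha|_1$ suffices for the inequality we want). Plugging $k \ge 1$ and $\beta = -|\alpha|_1$, the factor $k^{-|\alpha|_1}\langle t/k\rangle^{\beta} = k^{-|\alpha|_1}\langle t/k\rangle^{-|\alpha|_1}$; for $|t| \ge k$ this is $\asymp k^{-|\alpha|_1}(t/k)^{-|\alpha|_1} = |t|^{-|\alpha|_1} = \langle t\rangle^{\beta}$, and the spurious polynomial factor $k^{M}$ is killed by taking $\theta = \theta(M,|\alpha|_1)$ small enough — or, more cleanly, by noting $\|\tilde{\mathbf a}_k\|_{C^N(W)}$ can be bounded \emph{uniformly} in $k\ge 1$ because differentiating $\mathbf a(k^{-\alpha}y)$ in $y$ brings down factors $k^{-\alpha_j} \le 1$, so in fact $M = 0$. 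With $M=0$ the estimate closes with no loss: $|J(t,h,\mathbf a)| \lesssim \langle t\rangle^{\beta}\log^p(|t|+2)\|\mathbf a\|_{C^N}$, which is exactly $M(h,0) \curlyeqprec (\beta,p)$.

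\textbf{Main obstacle.} The delicate point is not the scaling itself but verifying that $P_k$ genuinely lies in the perturbation class $H_{r,0}(\epsilon)$ on a \emph{fixed} complex ball $B_{\mathbb C^n}(0,r)$: one must control the tail of the (possibly infinite) Taylor series $\sum_\nu k^{1-\rho_\nu} c_\nu y^\nu$ uniformly, using that the degrees $\rho_\nu$ of the monomials in $h_1$ tend to infinity along the support, so that $\sup_{\nu}|k^{1-\rho_\nu}|$ is controlled and the series still converges and is small on $B_{\mathbb C^n}(0,r)$ for $k$ large. This requires a quantitative version of "$h_1 \in \mathcal H_{\alpha,n}$" — essentially that the number of monomials of each quasi-homogeneous degree grows subexponentially and that $h_1$ extends holomorphically to a fixed polydisc — which is where the hypothesis that $h$ (hence $h_1$) is real-analytic at the origin is used in full. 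Once that uniform holomorphic smallness is in hand, everything else is the routine rescaling bookkeeping sketched above.
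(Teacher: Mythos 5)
Your argument does not prove the statement that is actually being made. The conclusion $M(h,0)\curlyeqprec(\beta,p)$ is a \emph{uniform} estimate: by the paper's definition you must bound $J(t,h+P,\mathbf a)$ for \emph{every} perturbation $P\in H_{r,0}(\epsilon)$, with constants independent of $P$. Your rescaling only addresses $J(t,h,\mathbf a)$: you spend the perturbation slot of the hypothesis $M(h_0,0)\curlyeqprec(\beta,p)$ on the rescaled tail $P_k$ coming from $h_1$, and no arbitrary external perturbation appears anywhere. If you try to include one, writing $h+P=h_0+(h_1+P)$, the substitution $x=k^{-\alpha}y$ together with $t=sk$ turns $tP(k^{-\alpha}y)$ into $s\,kP(k^{-\alpha}y)$, whose linear part has size of order $k^{1-\min_j\alpha_j}\epsilon$ — for the weights relevant here ($\alpha=(\tfrac12,\tfrac14)$ or $(\tfrac12,\tfrac13)$) this blows up as $k\to\infty$, so the perturbation cannot be absorbed into $H_{r,0}(\epsilon)$. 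This stability under perturbations is precisely the content of Karpushkin's theorem, which the paper does not reprove but cites from \cite{kar_paro}; Karpushkin's proof goes through deformation theory of the singularity, not a scaling argument, and your sketch at best yields the non-uniform decay estimate for the single phase $h$.

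Even for that weaker, non-uniform statement the mechanics are off. Under $x=k^{-\alpha}y$ the support of the rescaled amplitude $\mathbf a(k^{-\alpha}y)$ is $k^{\alpha}\,\mathrm{supp}(\mathbf a)$, which \emph{expands} as $k\to\infty$, contrary to your claim that it shrinks into $W$; so the uniform estimate for $h_0$, which only applies to amplitudes supported in the fixed small neighborhood $W$, cannot be invoked after this change of variables. (The standard fix is a dyadic decomposition of the support into quasi-homogeneous annuli, rescaling each piece separately and summing — but that again only gives the estimate for $h$ itself.) Finally, the identity $\beta(h_0)=-|\alpha|_1$ you invoke is false in general: $h_0(x)=x_1^2$ is quasi-homogeneous of degree $1$ for any weight $(\tfrac12,\alpha_2)$, yet $\beta=-\tfrac12>-|\alpha|_1$; and the inequality your bookkeeping actually requires is $\beta\ge-|\alpha|_1$ (so that $k^{-|\alpha|_1-\beta}\le1$ for $k\ge1$), not $\beta\le-|\alpha|_1$ as written. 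The first of these three issues is the fatal one: the uniformity over perturbations is the whole point of the lemma and is not reachable by the proposed rescaling.
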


With the help of Lemma \ref{principle}, we can reduce the problem of estimating the decay rate of some oscillatory with some polynomial phases.
In particular, we have the following Lemma for estimating the oscillatory integral $I(t,v).$
\begin{lem}\label{lemest}
    Let $\phi(v,x)$ be the phase function \eqref{phasefunction}. 
    Set $v\in\nabla g(\Gamma)$ and $x\in \Gamma(v).$ Let $\phi_{x_0}(v,x)=\phi(v,x+x_0).$ 
    Then there exists an invertible linear transform $u=T(x)$ on $\mathbb{R}^n$, the Taylor expansion of the phase function $\phi(v,u)$ with respect to $u$ under the transformation $T$ has one of the following forms,
    \begin{align}
        \phi_{x_0}(v,u)&=\phi_{x_0}(v,0)+\alpha_{2,0}u_1^2+\alpha_{1,2}u_1u_2^2+\alpha_{0,4}u_2^4+h_1(u_1,u_2)\label{normal1},\\
        \phi_{x_0}(v,u)&=\phi_{x_0}(v,0)+\beta_{2,0}u_1^2+\beta_{0,3}u_2^3+h_2(u_1,u_2)\label{normal2},
    \end{align}
    where $u=(u_1,u_2)$, $h_1\in \mathcal{H}_{(\frac{1}{2},\frac{1}{4}),2}$, $h_2\in \mathcal{H}_{(\frac{1}{2},\frac{1}{3}),2}$. Moreover, $\alpha_{2,0}u_1^2+\alpha_{1,2}u_1u_2^2+\alpha_{0,4}u_2^4$ is $\mathbb{R}$-nondegenerate in \eqref{normal1}, and    $\beta_{2,0},\beta_{0,3}\neq 0$ in \eqref{normal2}
\end{lem}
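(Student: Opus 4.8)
The plan is to put $\phi_{x_0}(v,\cdot)$ into normal form by exploiting the fact, already noted in Section~\ref{sec:4}, that $\mathrm{Hess}_x\phi(v,x)$ has corank at most $1$; since $x_0\in\Gamma(v)$ makes it degenerate, the corank is exactly $1$, so after a linear change of variables the reduced singularity is necessarily of type $A_k$ for some $k\ge 2$. First I would pick an invertible linear map $u=T(x)$ sending the kernel of $\mathrm{Hess}_x\phi(v,x_0)$ to the $u_2$-axis; concretely one may take $T^{-1}$ with first column $(1,0)^t$ and second column a fixed generator $e$ of the kernel, so that the coefficients below become explicit trigonometric expressions in $x_0$. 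Because $\nabla_x\phi(v,x_0)=0$ (as $v=\nabla g(x_0)$) and $\phi-g$ is affine, the Taylor expansion of $\phi_{x_0}(v,u):=\phi_{x_0}(v,T^{-1}u)$ has the form $\phi_{x_0}(v,0)+\alpha_{2,0}u_1^2+R(u)$ with $\alpha_{2,0}\neq0$ and $R(u)=\sum_{i+j\ge3}r_{i,j}u_1^iu_2^j$.

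The argument then splits according to whether the coefficient $r_{0,3}$ of $u_2^3$ vanishes. If $r_{0,3}\neq0$, set $\beta_{2,0}=\alpha_{2,0}$, $\beta_{0,3}=r_{0,3}$ and $h_2=R-r_{0,3}u_2^3$; every monomial $u_1^iu_2^j$ occurring in $h_2$ has $i+j\ge3$ and $(i,j)\neq(0,3)$, hence $3i+2j>6$, i.e. weighted degree $\tfrac i2+\tfrac j3>1$, so $h_2\in\mathcal H_{(1/2,1/3),2}$ and $\beta_{2,0}u_1^2+\beta_{0,3}u_2^3\in\mathcal E_{(1/2,1/3),2}$, which is \eqref{normal2}. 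If $r_{0,3}=0$, set $\alpha_{1,2}=r_{1,2}$, $\alpha_{0,4}=r_{0,4}$, $h_0=\alpha_{2,0}u_1^2+\alpha_{1,2}u_1u_2^2+\alpha_{0,4}u_2^4$ and $h_1=\phi_{x_0}(v,u)-\phi_{x_0}(v,0)-h_0$; since the only monomials of total degree $\ge2$ with weighted degree $1$ for the weight $(1/2,1/4)$ are $u_1^2,u_1u_2^2,u_2^4$, a similar count gives $h_0\in\mathcal E_{(1/2,1/4),2}$ and $h_1\in\mathcal H_{(1/2,1/4),2}$, which is \eqref{normal1}. It then remains only to verify that $h_0$ is $\mathbb R$-nondegenerate.

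Reading its Newton polyhedron, $h_0$ is $\mathbb R$-degenerate precisely when $4\alpha_{2,0}\alpha_{0,4}=\alpha_{1,2}^2$, i.e. when $h_0=\alpha_{2,0}\bigl(u_1+\tfrac{\alpha_{1,2}}{2\alpha_{2,0}}u_2^2\bigr)^2$ is a perfect square; after completing the square this would force an $A_k$ singularity with $k\ge4$ at $x_0$, equivalently the reduced one–variable function $\widetilde\gamma(u_2)=\phi_{x_0}(v,\chi(u_2),u_2)$ — with $\partial_{u_1}\phi_{x_0}(v,\chi(u_2),u_2)\equiv0$, $\chi(0)=\chi'(0)=0$ — would vanish to order $\ge5$. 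By the splitting lemma one computes $\widetilde\gamma''''(0)=24\bigl(\alpha_{0,4}-\alpha_{1,2}^2/(4\alpha_{2,0})\bigr)=\gamma''''(0)-6\alpha_{1,2}^2/\alpha_{2,0}$, where $\gamma(\tau)=\phi_{x_0}(v,\tau e)$, so it suffices to prove $\widetilde\gamma''''(0)\neq0$ whenever the Hessian is degenerate and $\gamma'''(0)=0$. Writing $a,b,c$ for the cosines of $x_{0,1},x_{0,2},x_{0,1}+x_{0,2}$ and $s_1,s_2,s_3$ for the corresponding sines, \eqref{hess} gives the degeneracy condition $ab+bc+ca=0$, which is always accompanied by $a^2+b^2+c^2-2abc=1$; taking $e=(c,-(a+c))$ when $a+c\neq0$ (and $e=(-(b+c),c)$ when $b+c\neq0$, which cannot both fail without forcing corank $2$), a direct differentiation of the cosine terms of $g$ gives, in the first case, $\alpha_{2,0}=a+c$, $\gamma'''(0)=-2\bigl(c^3s_1-(a+c)^3s_2-a^3s_3\bigr)$, $\gamma''''(0)=-2\bigl(c^4a+(a+c)^4b+a^4c\bigr)$, and $\alpha_{1,2}=-c^2(s_1+s_3)+2c(a+c)s_3-(a+c)^2s_3$.

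Substituting these into $\widetilde\gamma''''(0)=0$, eliminating the sines via $s_i^2=1-(\cdot)^2$ and $s_3=s_1b+as_2$, and invoking the two constraints, the problem reduces to checking that a certain polynomial system in $(a,b,c)$ has no solution on the compact degeneracy locus inside $[-1,1]^3$; this is a finite elimination, and the $A_3$ points that do occur — for example $x_0=(\pi/2,\pi/2)$, where $\phi_{x_0}(v,\cdot)$ is constant along the kernel line $x_1+x_2=\pi$ yet $\widetilde\gamma''''(0)\neq0$ — are exactly those for which the resulting $h_0$ is $\mathbb R$-nondegenerate with Newton distance $4/3$. I expect this last elimination to be the main obstacle: it is elementary but the algebra is heavy, and some care is needed with the sign ambiguities of $s_1,s_2,s_3$ and with the boundary cases $a+c=0$ or $b+c=0$. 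It is also the only step where the specific arithmetic of the hexagonal phase $g$ enters — heuristically, a generic two-parameter family $\{g-\langle v,\cdot\rangle\}_v$ exhibits only $A_1,A_2,A_3$, but $g$ is not generic so the absence of $A_{\ge4}$ must be verified — and it is precisely what makes the decay rate $\langle t\rangle^{-3/4}$ in Lemma~\ref{mainlem} sharp.
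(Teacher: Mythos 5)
Your reduction to normal form is structurally the same as the paper's: you use corank $1$ of \eqref{hess} to choose a linear $T$ adapted to the kernel, split on whether the coefficient of $u_2^3$ vanishes, check the weighted-degree bookkeeping for $\mathcal H_{(1/2,1/3),2}$ and $\mathcal H_{(1/2,1/4),2}$, and correctly identify that everything hinges on showing $\alpha_{1,2}^2-4\alpha_{2,0}\alpha_{0,4}\neq 0$ whenever the cubic coefficient along the kernel vanishes on the degeneracy locus. Up to that point the argument is sound (your identity $a^2+b^2+c^2-2abc=1$ and the kernel vector $e=(c,-(a+c))$ are both correct, and the dichotomy on $a+c=0$ versus $b+c=0$ is handled properly).

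The gap is that you stop exactly where the proof actually begins. The statement "the problem reduces to checking that a certain polynomial system in $(a,b,c)$ has no solution on the compact degeneracy locus; this is a finite elimination" is not a proof — it is the entire nontrivial content of the lemma, and you explicitly defer it. The paper carries this out: parametrizing the degenerate Hessian by $\lambda=-\cos x_2/\cos x_1$, it derives the vanishing-cubic condition \eqref{eq2}, the degenerate-discriminant condition \eqref{cancel_eq3}, and then proves in Proposition \ref{calculation} (via the substitution $A=\tfrac{x_1-x_2}{2}$, $B=\tfrac{x_1+x_2}{2}$, solving \eqref{cancel_eq3} for $\sin^2A$ and showing the left side of \eqref{change1} is strictly negative in two sub-cases) that the three loci meet only at the points $\cos x_1=\cos x_2=0$, which are precisely the $A_3$ points you mention and are handled separately. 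Without this elimination you cannot exclude $A_{\ge 4}$ (or worse) singularities, and an $A_4$ point would already degrade the uniform bound below $\langle t\rangle^{-3/4}$, so nothing short of the full verification suffices. A secondary, smaller omission: you do not address the boundary configurations $\cos x_1=\cos x_2=0$ and $\cos x_2=\cos(x_1+x_2)=0$ as separate cases (the paper's cases (A) and (B)), where your generic kernel vector and the $\lambda$-parametrization both degenerate and the normal form must be computed directly.
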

By the definition \eqref{homogeneous}, $\alpha_{2,0}u_1^2+\alpha_{1,2}u_1u_2^2+\alpha_{0,4}u_2^4\in\mathcal{E}_{(\frac{1}{2},\frac{1}{4}),2}$ and $\beta_{2,0}u_1^2+\beta_{0,3}u_2^3\in\mathcal{E}_{(\frac{1}{2},\frac{1}{3}),2}$. Therefore, we only need to estimate the oscillatory integrals whose phase functions are $\alpha_{2,0}u_1^2+\alpha_{1,2}u_1u_2^2+\alpha_{0,4}u_2^4$ or $\beta_{2,0}u_1^2+\beta_{0,3}u_2^3$. By Lemma \ref{dimensional_reduction} and \ref{newton_lemma}, the maximal exponential pairs of the $\mathbb{R}$-nondegenerate polynomials are listed as below.
\begin{center}
\begin{tabular}{|c|c|}
\hline
$f$ & $(\beta(f),p(f))$  \\ \hline
$\alpha_{2,0}u_1^2+\alpha_{1,2}u_1u_2^2+\alpha_{0,4}u_2^4$ &  $(-\frac{3}{4},0)$ \\ \hline
$\beta_{2,0}u_1^2+\beta_{0,3}u_2^3$ & $(-\frac{5}{6},0)$ \\ \hline
\end{tabular}
\end{center}

\begin{lem}\label{singular_estimate}
    Let $0$ be an singularity of phase functions $\phi_1$ and $\phi_2$ with the forms \eqref{normal1} and \eqref{normal2} respectively, then the following uniform estimates hold.
    \begin{align}
        M(\phi_1,0)&\curlyeqprec(-\frac{3}{4},0),\label{A_2}\\
        M(\phi_2,0)&\curlyeqprec(-\frac{5}{6},0).\label{D_4}
    \end{align}
\end{lem}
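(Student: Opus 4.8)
The plan is to deduce Lemma \ref{singular_estimate} from Lemma \ref{principle} together with the maximal exponential pairs already tabulated for the two model polynomials. The normal forms \eqref{normal1} and \eqref{normal2} are exactly decompositions of the type required by Lemma \ref{principle}: in \eqref{normal1} we write $\phi_{x_0}(v,u)-\phi_{x_0}(v,0)=h_0(u)+h_1(u)$ with $h_0(u)=\alpha_{2,0}u_1^2+\alpha_{1,2}u_1u_2^2+\alpha_{0,4}u_2^4\in\mathcal{E}_{(\frac12,\frac14),2}$ and $h_1\in\mathcal{H}_{(\frac12,\frac14),2}$, and similarly in \eqref{normal2} with $h_0(u)=\beta_{2,0}u_1^2+\beta_{0,3}u_2^3\in\mathcal{E}_{(\frac12,\frac13),2}$ and $h_2\in\mathcal{H}_{(\frac12,\frac13),2}$. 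Since a constant shift of the phase only contributes a unimodular factor to the oscillatory integral and does not affect the estimate, it suffices to prove the uniform estimates for $h_0$ at the origin.

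First I would record that, by Lemma \ref{lemest}, $h_0$ in the first case is $\mathbb{R}$-nondegenerate, so Theorem \ref{newton_lemma} applies; computing the Newton polyhedron of $\alpha_{2,0}u_1^2+\alpha_{1,2}u_1u_2^2+\alpha_{0,4}u_2^4$ one finds Newton distance $d=4/3$, hence $\beta\le-3/4$, and the principal face is an edge so $k_\phi=1$ and $p\le0$; combined with the lower bound coming from the leading term this pins down $O(h_0)=(-\frac34,0)$, as in the table. In the second case, $\beta_{2,0}u_1^2+\beta_{0,3}u_2^3$ splits as a sum of a one-variable nondegenerate phase $\beta_{0,3}u_2^3$ (with $M\curlyeqprec(-\frac13,0)$ by the classical one-dimensional van der Corput estimate, or equivalently Theorem \ref{newton_lemma}) and a nondegenerate quadratic $\beta_{2,0}u_1^2$; applying Lemma \ref{dimensional_reduction} with $m=1$ gives $M\curlyeqprec(-\frac13-\frac12,0)=(-\frac56,0)$, matching the table. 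This establishes $M(h_0,0)\curlyeqprec(-\frac34,0)$ and $M(h_0,0)\curlyeqprec(-\frac56,0)$ respectively for the two model polynomials.

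Then I would invoke Lemma \ref{principle}: in each case $M(h_0,0)\curlyeqprec(\beta,p)$ for the pair just computed, and the decomposition $\phi_{x_0}(v,\cdot)-\phi_{x_0}(v,0)=h_0+h_i$ has $h_i$ in the appropriate space $\mathcal{H}_{\alpha,2}$, so Lemma \ref{principle} yields $M(\phi_{x_0}(v,\cdot)-\phi_{x_0}(v,0),0)\curlyeqprec(\beta,p)$, and hence $M(\phi_1,0)\curlyeqprec(-\frac34,0)$ and $M(\phi_2,0)\curlyeqprec(-\frac56,0)$ after restoring the harmless constant. One small point to check is that the definition of uniform estimate allows an analytic perturbation $P\in H_{r,x_0}(\epsilon)$: since Lemma \ref{principle} is already stated as an implication $M(h_0,0)\curlyeqprec(\beta,p)\Rightarrow M(h,0)\curlyeqprec(\beta,p)$ in terms of the uniform-estimate symbol, the perturbation is absorbed automatically and no extra argument is needed.

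The main obstacle is not in the deduction, which is essentially bookkeeping, but in justifying the inputs: namely that the model polynomial in \eqref{normal1} really is $\mathbb{R}$-nondegenerate (so that Varchenko's Theorem \ref{newton_lemma} is applicable and the tabulated pair $(-\frac34,0)$ is correct) and that $h_1,h_2$ genuinely lie in the stated quasi-homogeneous classes — both of which are assertions of Lemma \ref{lemest} and are proved there. Given Lemma \ref{lemest} and Lemma \ref{principle}, the present lemma follows; the only genuinely delicate verification local to this proof is the Newton-polyhedron computation of the distance $d=4/3$ and the multiplicity $k_\phi=1$ for $\alpha_{2,0}u_1^2+\alpha_{1,2}u_1u_2^2+\alpha_{0,4}u_2^4$, which is a short direct computation.
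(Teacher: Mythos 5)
You take essentially the same route as the paper, whose proof of Lemma \ref{singular_estimate} is exactly the reduction you describe: split off the quasi-homogeneous principal part, obtain its exponent from Theorem \ref{newton_lemma} and Lemma \ref{dimensional_reduction} (the table), and transfer the uniform estimate to the full phase by Lemma \ref{principle}. Your treatment of \eqref{normal2} is complete as written: van der Corput gives $M(\beta_{0,3}u_2^3,0)\curlyeqprec(-\tfrac13,0)$ uniformly in the small analytic perturbation (its third derivative is negligible by Cauchy estimates), and Lemma \ref{dimensional_reduction} with $m=1$ adds the $-\tfrac12$. The one point you should make explicit concerns \eqref{normal1}: Theorem \ref{newton_lemma} controls only the oscillation index $O(h_0)$, i.e.\ the asymptotics for the fixed phase $h_0$, and this does not by itself yield the uniform estimate $M(h_0,0)\curlyeqprec(-\tfrac34,0)$ that Lemma \ref{principle} needs as input; uniform estimates over perturbations are strictly stronger than decay of a single oscillatory integral, which is the whole point of the machinery in Section \ref{sec:2}. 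The bridge is Karpushkin's two-dimensional stability theorem quoted there ($M(f,0)\curlyeqprec O(f)$ for an analytic $f$ with $\nabla f(0)=0$ and $\det\mathrm{Hess}f(0)=0$), which applies to $h_0$ since its Hessian at the origin is $\mathrm{diag}(2\alpha_{2,0},0)$; combining it with your Newton-polyhedron computation ($d=\tfrac43$, principal face an edge, so $O(h_0)\le(-\tfrac34,0)$ lexicographically, using the $\mathbb{R}$-nondegeneracy supplied by Lemma \ref{lemest}) gives $M(h_0,0)\curlyeqprec O(h_0)\curlyeqprec(-\tfrac34,0)$, and then Lemma \ref{principle} finishes as you say. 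With that citation your argument closes; incidentally, the exact identification $O(h_0)=(-\tfrac34,0)$ via a ``lower bound from the leading term'' is unnecessary (only the upper bound is used), and the parenthetical claim that the one-dimensional cubic case follows ``equivalently'' from Theorem \ref{newton_lemma} has the same $O$-versus-$M$ conflation, though the van der Corput justification you give is the correct one.
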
 
\begin{proof}[Proof of Lemma \ref{mainlem}]
    This Lemma can be obtained by Lemma \ref{principle}, \ref{lemest} and \ref{singular_estimate} immediately.
\end{proof}

Now we begin to proof Lemma \ref{lemest}. Since we only need to analyze all possible singularities, in later arguments, we always assume $v\in\nabla g(\Gamma)$ and $x\in \Gamma(v).$ 

Since the Hessian matrix $\mathrm{Hess}_x\phi(v,x)$ is degenerate at $x$,  by the previous arguments, its rank is $1.$ Therefore, without loss of generality, we can assume the following equation holds 
\[
\cos(x_1+x_2)=\lambda(\cos(x_1+x_2)+\cos(x_1)),\cos(x_1+x_2)+\cos(x_2)=\lambda\cos(x_1+x_2).
\]
where $\lambda\in \mathbb{R}.$ And a direct calculation shows that the degeneration of the Hessian is equivalent to
\begin{align}
	\mathrm{det}(\mathrm{Hess}_x\phi(v,x))=(\cos x_1+\cos x_2)\cos(x_1+x_2)+\cos x_1\cos x_2=0.\label{eq1}
\end{align}

\begin{proof}[Proof of Lemma \ref{lemest}]
 We divide our argument into four cases.
 \[(\mathrm{A})\cos{x_1}=\cos{x_2}=0,~(\mathrm{B})\lambda=0,~(\mathrm{C}) \lambda> 0~\text{and}~(\mathrm{D})\lambda<0.\] Here in  $(\mathrm{B})\sim(\mathrm{D})$ we assume that $\cos x_1\neq 0.$
\begin{enumerate}[(A)]
	\item When $\cos{x_1}=\cos{x_2}=0,$ then $\sin(x_1+x_2)=0.$ Then the function $\phi_x(v,x')=\phi(v,x+x')$ has the Taylor expansion
	\begin{align*}
		\phi_x(v,x')   &\sim\phi_x(v,0)+\cos(x_1+x_2)(x_1'+x_2')^2-\frac{1}{3}(\sin x_1(x_1')^3+\sin x_2(x_2')^3)\\
		&-\frac{1}{12}\cos(x_1+x_2)(x_1'+x_2')^4+\cdots
	\end{align*}
Set $u=(u_1,u_2)$, where $u_1=x_1'+x_2'$ and $u_2=x_2'.$ Then 
\begin{align*}
	\phi_x(v,u)&\sim \phi_x(v,0)+\cos(x_1+x_2)u_1^2\\&-\frac{1}{3}(\sin x_1 u_1^3-3\sin x_1 u_1^2u_2+3\sin x_1 u_1u_2^2+(\sin x_2-\sin x_1)u_2^3)
	\\
	&-\frac{1}{12}\cos(x_1+x_2)u_1^4+\cdots
\end{align*}
When $\sin x_1\neq\sin x_2$, $0$ is the singularity with the form \eqref{normal1}. When $\sin x_1=\sin x_2$, $0$ is the singularity with the form \eqref{normal2}, where $\alpha_{2,0}=\pm1,~\alpha_{1,2}=\pm1$ and $\alpha_{0,4}=0.$ Then it is a $\mathbb{R}-$nondegenerate polynomial. 
\item Now we consider the second case where $\lambda=0.$ In this case, we have $\cos x_2=\cos(x_1+x_2)=0,$ and
\begin{align*}
	\mathrm{Hess}_x\phi(v,x)=2\begin{pmatrix}
		\cos x_1&0\\
		0&0
	\end{pmatrix}.
\end{align*}
 In this case,
\begin{align*}
    \phi_x(v,x')&\sim\phi_x(v,0)+\cos x_1(x_1')^2-\frac{1}{3}(\sin{x_2}(x_2')^3+\sin{(x_1+x_2)}(x_1'+x_2')^3)
    \\&-\frac{1}{12}\cos x_1(x_1')^4
    +\cdots,
\end{align*}
where $|\cos{x_1}|= 1.$ So when $\sin x_2+\sin(x_1+x_2)\neq 0$, $0$ is the singularity with the form \eqref{normal1}, otherwise $0$ is the singularity with the form \eqref{normal2}.
\item
Now let us come to the third case, where $\lambda> 0$. In this case, we have the following equations.
\begin{align}
    \cos(x_1)&=\frac{1-\lambda}{\lambda}\cos(x_1+x_2),\label{l1}\\
    \cos(x_2)&=(\lambda-1)\cos(x_1+x_2),\label{l2}\\
    \lambda&=-\frac{\cos(x_2)}{\cos(x_1)}\label{l3}.
\end{align}
Similarly for the function $\phi_x(v,x')=\phi(v,x+x')$, one have the following Taylor expansion.
\begin{align*}
&\phi_x(v,x')   \sim \phi_x(v,0)+\cos{(x_1+x_2)}(\frac{1}{\sqrt{\lambda}}x_1'+\sqrt{\lambda} x_2')^2\nonumber \\
   &-\frac{1}{3}\big[\sin(x_1+x_2)(x_1'+x_2')^3+\sin{x_1}(x_1')^3+\sin{x_2}(x_2')^3\big]
\\&-\frac{1}{12}\big[\cos{(x_1+x_2)}(x_1'+x_2')^4+\cos{x_1}(x_1')^4+\cos{x_2}(x_2')^4\big]+\cdots
\end{align*}
By change of variables, set $u=(u_1,u_2)$, where $u_1=\frac{1}{\sqrt{\lambda}}x_1'+\sqrt{\lambda} x_2'$ and $u_2=x_2'$, we obtain
\begin{align}
&\phi_x(v,u)\sim\phi_x(v,0) + \cos{(x_1+x_2)}u_1^2\nonumber
\\&
-\frac{1}{3}\big[\sin{(x_1+x_2)}(\sqrt{\lambda} u_1+(1-\lambda)u_2)^3
+\sin{x_1}(\sqrt{\lambda} u_1-\lambda u_2)^3+(\sin{x_2})u_2^3\big]\nonumber
\\&-\frac{1}{12}\big[
\cos{(x_1+x_2)}(\sqrt{\lambda} u_1+(1-\lambda)u_2)^4+\cos{x_1}(\sqrt{\lambda} u_1-\lambda u_2)^4+(\cos{x_2})u_2^4
\big]\nonumber
\\&
+\cdots\label{expansion1}
\end{align}
We denote by $\alpha_{2,0},\alpha_{1,2},\alpha_{0,3}$ and $\alpha_{0,4}$ the coefficients of monomials $u_1^2,u_1u_2^2,u_2^3$ and $u_2^4$ in \eqref{expansion1} respectively. Then we have
\begin{align*}
    &\alpha_{2,0}=\cos{(x_1+x_2)},
    \\&\alpha_{1,2}=-
    \sin(x_1+x_2)\sqrt{\lambda}(1-\lambda)^2-\sin{x_1}\sqrt{\lambda}\lambda^2
    ,
    \\&\alpha_{0,3}=-\frac{1}{3}\big[
\sin{(x_1+x_2)}(1-\lambda)^3-\sin{x_1}\lambda^3+\sin{x_2}
    \big],
    \\&\alpha_{0,4}=-\frac{1}{12}
    \big[
    \cos{(x_1+x_2)}(1-\lambda)^4+\cos{x_1}\lambda^4+\cos{x_2}
    \big].
\end{align*}
Let $\Sigma_0$ be solutions to the equation \eqref{eq1}. Moreover, we denote by $\Sigma_1$ and $\Sigma_2$ subsets of $\mathbb{R}^2$ where $\alpha_{0,3}=0$ and $\alpha_{2,0}u_1^2+\alpha_{1,2}u_1u_2^2+\alpha_{0,4}u_2^4$ is $\mathbb{R}$-degenerate respectively.
 Then the assertions in Lemma \ref{singular_estimate} in Case (C) hold if $\Sigma_0\cap\Sigma_1\cap\Sigma_2=\emptyset$ for $\lambda>0.$
 Firstly, we consider the set $\Sigma_1$.
Suppose that the coefficient $\alpha_{0,3}=0$. Then we have 
\begin{align}\label{thrid_term}
(1-\lambda)^3\sin{(x_1+x_2)}-\lambda^3\sin{x_1}+\sin{x_2}=0.
\end{align}
By equations \eqref{l3} and \eqref{thrid_term}, we have
\begin{align*}
(\cos{x_1}+\cos{x_2})^3\sin{(x_1+x_2)}+\cos^3 x_2\sin x_1+\cos^3x_1\sin x_2=0,
\end{align*}
which leads to 
\begin{align}\label{cancel_eq1}
    \sin{(x_1+x_2)}\big[(\cos{x_1}+\cos{x_2})^3+\cos^2x_1+\cos^2x_2-\cos{x_1}\cos{x_2}\cos(x_1-x_2)\big]=0.
\end{align}
Therefore, $\Sigma_1=\Sigma_1^1\cup\Sigma_1^2$, where
\[
\Sigma_1^1=\{(x_1,x_2)\in\Gamma(v):x_1+x_2=k\pi,~\text{for some}~k\in \mathbb{Z}\},
\]
and
$\Sigma_1^2$ is the solution to 
\begin{align}\label{eq2}
(\cos{x_1}+\cos{x_2})^3+\cos^2x_1+\cos^2x_2-\cos{x_1}\cos{x_2}\cos(x_1-x_2)=0. 
\end{align}

So if $x\in\Sigma_0\cap \Sigma_1^1$, by equation \eqref{eq1} we have $\cos{x_2}=0,$ which will be attributed to the cases (A) or (B). Therefore, we only need to analyze the set $\Sigma_0\cap\Sigma_1^2\cap\Sigma_2.$

Suppose that the polynomial $\alpha_{2,0}u_1^2+\alpha_{1,2}u_1u_2^2+\alpha_{0,4}u_2^4$ is $\mathbb{R}$-degenerate, then its discriminant is equal to $0$, that is

\begin{align}\label{core_eq1}
   \alpha_{1,2}^2- 4\alpha_{2,0}\alpha_{0,4}=0.
\end{align}

Since
\begin{align*}
    \alpha_{2,0}\alpha_{0,4}=-\frac{\cos{(x_1+x_2)}}{12}
    \big[
    \cos{(x_1+x_2)}(1-\lambda)^4+\cos{x_1}\lambda^4+\cos{x_2}\
    \big],
\end{align*}
by equations \eqref{l1} and \eqref{l2},
\begin{align}\label{LHS}
    &\alpha_{2,0}\alpha_{0,4}=\frac{\cos^2(x_1+x_2)}{4}\lambda(1-\lambda)^2.
\end{align}
Therefore, by \eqref{core_eq1} and \eqref{LHS} we have
\begin{align}\label{core_eq2}
    \frac{\cos^2(x_1+x_2)}{4}\lambda(1-\lambda)^2=\frac{1}{4}
    \big[
    \sin(x_1+x_2)\sqrt{\lambda}(1-\lambda)^2+\sin{x_1}\lambda^{\frac{5}{2}}
    \big]^2.
\end{align}
By equations \eqref{eq1}, \eqref{l3} and \eqref{core_eq2}, we have 
\begin{align}\label{symmetry1}
    \cos^2{x_1}\cos^2x_2=\big[
    \sin{(x_1+x_2)}\frac{(\cos x_1+\cos{x_2})^2}{\cos{x_1}}+\sin{x_1}\frac{\cos^2{x_2}}{\cos{x_1}}\big]^2
    .
\end{align}
Let $x\in \Sigma_0\cap\Sigma_2$, then by \eqref{cancel_eq1} and \eqref{symmetry1}, we have 
\begin{align}\label{cancel_eq2}
    \cos^2{x_1}\cos^2{x_2}(\cos{x_1}+\cos{x_2})^2=(\cos^2x_2\sin{x_1}-\cos^2{x_1}\sin{x_2})^2.
\end{align}
By direct calculation, \eqref{cancel_eq2} is equivalent to the equation
\begin{align*}
	\cos^2{\frac{x_1+x_2}{2}}[\cos^2{x_1}\cos^2{x_2}\cos^2{\frac{x_1-x_2}{2}}-\sin^2{\frac{x_1-x_2}{2}}(\cos^2{\frac{x_1-x_2}{2}}+\sin^2{\frac{x_1+x_2}{2}})^2]=0.
\end{align*}

Since in case (C), $\cos{\frac{x_1+x_2}{2}}\neq 0,$ we denote by by $\Sigma_2'$ the solution to 
\begin{align}\label{cancel_eq3}
\cos^2{x_1}\cos^2{x_2}\cos^2{\frac{x_1-x_2}{2}}-\sin^2{\frac{x_1-x_2}{2}}(\cos^2{\frac{x_1-x_2}{2}}+\sin^2{\frac{x_1+x_2}{2}})^2=0.
\end{align}
Then sets $\Sigma_0,\Sigma_1^2$ and $\Sigma_2'$ are shown in Figure \ref{sigma012}, with curves colored in red, green and black respectively.

\begin{figure}[htbp]
	\centering
	\includegraphics[scale=0.7]{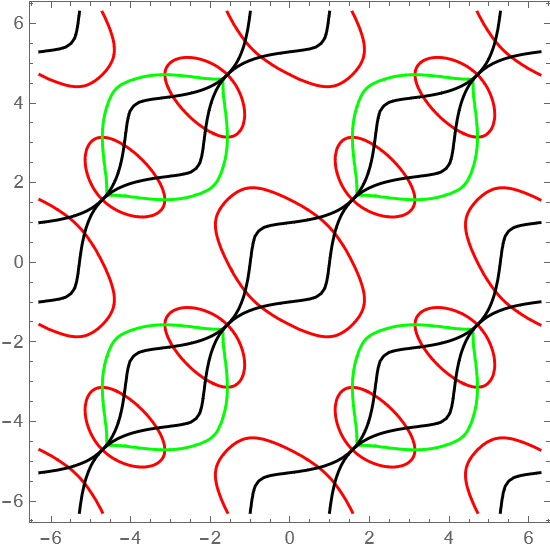}
	\captionof{figure}{\small Curves $\Sigma_0,\Sigma_1^2$ and $\Sigma_2'$.}
	\label{sigma012}
\end{figure} 
The intersection of three curves are empty whenever $\lambda=\frac{\cos{x_2}}{\cos{x_1}}>0$ as shown in Figure \ref{sigma012}. Since the proof consists of lengthy calculations, we will leave it to the Proposition \ref{calculation} in Appendix \ref{app}.
\item 
Finally, we consider the case when $\lambda<0.$ Similarly, let $\phi_x(v,x')=\phi(v,x+x')$, we have
\begin{align*}
&\nonumber \phi_x(v,x')
\sim \phi_x(v,0) -\cos{(x_1+x_2)}(\frac{1}{\sqrt{-\lambda}}x_1'-\sqrt{-\lambda} x_2')^2\nonumber
\\&
-\frac{1}{3}\big[\sin{(x_1+x_2)}(x_1'+x_2')^3+\sin{x_1}(x_1')^3+\sin{x_2}(x_2')^3\big]
\\&
-\frac{1}{12}[\cos(x_1+x_2)(x_1'+x_2')^4+\cos{x_1}(x_1')^4+\cos_{x_2}(x_2')^4
+\cdots\nonumber
\\
&=\phi_x(v,0)-\cos{(x_1+x_2)}u_1^2\\&-\frac{1}{3}\big[\sin{(x_1+x_2)}(\sqrt{-\lambda}u_1+(1-\lambda)u_2)^3\nonumber
+\sin{x_1}(\sqrt{-\lambda}u_1-\lambda u_2)^3+ 
\sin{x_2}u_2^3 \big]
\\&-\frac{1}{12}[\cos(x_1+x_2)(\sqrt{-\lambda}u_1+(1-\lambda)u_2)^4+\cos{x_1}(\sqrt{-\lambda}u_1-\lambda u_2)^4+ 
\cos{x_2}u_2^4]+\cdots~
\end{align*}
where $u_1=\frac{1}{\sqrt{-\lambda}}x_1'-\sqrt{-\lambda} x_2'$ and $u_2=x_2'$. Since all the analyses are similar to the case (C), we omit the detailed proof here.

\end{enumerate}
\end{proof}
\section{Strichartz estimate on hexagonal triangulation and its application}\label{sec:5}
To prove the Strichartz estimate on the hexagonal triangulation, we need the well-known result of Keel and Tao.
\begin{thm}[Keel and Tao \cite{keel1998endpoint}]\label{tao}
Let $H$ be a Hilbert space, $(X,\mathrm{d}x)$ be a measure space, and $U(t):H\to L^2(X)$ be a one-parameter family of mappings, which has the energy estimate
\[
\|U(t)f\|_{L_x^2}\le C\|f\|_H,
\]
and the decay estimate 
\[\|U(t)(U(S))^*g\|_{L^{\infty}(X)}\le C\left\langle t-s\right\rangle^{-\sigma}\|g\|_{L^1(X)},\]
for some $\sigma>0$. Then,
\begin{align*}
    &\|U(t)f\|_{L^qL^r}\le C\|f\|_{L^2},\\
    &\|\int (U(t))^*F(t,\cdot)\mathrm{d}t\|_{L^2}\le C\|F\|_{L_t^{q'}L_x^{r'}},\\
    &\|\int_0^tU(t)(U(s))^*F(s,\cdot)\mathrm{d}s\|_{L^2}\le C\|F\|_{L_t^{\tilde{q}'}L_x^{\tilde{r}'}},
\end{align*}
where exponent pairs $(q,r,\sigma)$ and $(\tilde{q},\tilde{r},\sigma)$ do not equal to $(2,\infty,1)$, and
\[
\frac{1}{q}+\frac{\sigma}{r}\le\frac{\sigma}{2}
.\]
\end{thm}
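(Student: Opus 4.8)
The plan is to prove all three conclusions at once by the $TT^\ast$ method, reducing everything to a single retarded bilinear estimate, and then to split the analysis according to whether the time exponent equals $2$ or not. Write $Tf(t)=U(t)f$, so that the first conclusion is the boundedness $T\colon H\to L_t^qL_x^r$. Its formal adjoint is $T^\ast F=\int (U(s))^\ast F(s)\,\mathrm{d}s$, which is exactly the second conclusion, and the composition $TT^\ast F(t)=\int U(t)(U(s))^\ast F(s)\,\mathrm{d}s$ governs the third. Since $\|T\|=\|T^\ast\|=\|TT^\ast\|^{1/2}$, it suffices to bound $TT^\ast\colon L_t^{q'}L_x^{r'}\to L_t^qL_x^r$ together with its retarded truncation $F\mapsto\int_{s<t}U(t)(U(s))^\ast F(s)\,\mathrm{d}s$, i.e.\ the retarded bilinear form $\mathcal B(F,G)=\iint_{s<t}\langle U(t)(U(s))^\ast F(s),G(t)\rangle\,\mathrm{d}s\,\mathrm{d}t$. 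The energy hypothesis gives $\|U(t)\|_{H\to L^2}\lesssim1$ and, by duality, $\|(U(s))^\ast\|_{L^2\to H}\lesssim1$, so $\|U(t)(U(s))^\ast\|_{L^2\to L^2}\lesssim1$; the decay hypothesis gives $\|U(t)(U(s))^\ast\|_{L^1\to L^\infty}\lesssim\langle t-s\rangle^{-\sigma}$.

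Interpolating these two bounds by Riesz--Thorin yields, for all $2\le a,b\le\infty$, the mixed estimate $\|U(t)(U(s))^\ast\|_{L^{b'}\to L^{a}}\lesssim\langle t-s\rangle^{-\sigma(1-\frac1a-\frac1b)}$. The sharp-admissibility relation $\frac1q+\frac{\sigma}{r}=\frac{\sigma}{2}$ is exactly equivalent to $\sigma(1-\frac2r)=\frac2q$, so along the diagonal $a=b=r$ the kernel decays like $\langle t-s\rangle^{-2/q}\le|t-s|^{-2/q}$. The weaker condition $\frac1q+\frac{\sigma}{r}\le\frac{\sigma}{2}$ allowed for the inhomogeneous estimate places the pair strictly below the sharp line; this will turn out to be the easier regime and is treated at the end.

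When $q>2$ the estimate is elementary, and this is automatic when $\sigma<1$. By Minkowski's inequality and the diagonal kernel bound, $\|TT^\ast F(t)\|_{L_x^r}\le\int|t-s|^{-2/q}\|F(s)\|_{L_x^{r'}}\,\mathrm{d}s$; taking the $L_t^q$ norm and applying the one-dimensional Hardy--Littlewood--Sobolev inequality with exponents $(q',q)$ and singularity $2/q<1$ closes the homogeneous and dual estimates, since the balance $1+\frac1q=\frac1{q'}+\frac2q$ holds identically. The retarded estimate then follows from the Christ--Kiselev lemma, which applies because $\tilde q'<\tilde q$ strictly off the endpoint. This already disposes of the case relevant to the present paper, where $\sigma=\frac34<1$ forces $q>2$.

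The substantive difficulty is the endpoint $q=2$, which occurs only when $\sigma\ge1$ and which degenerates to the excluded triple $(2,\infty,1)$ precisely at $\sigma=1$. Here $2/q=1$ is the forbidden endpoint of Hardy--Littlewood--Sobolev, so the convolution argument fails and one must work directly with $\mathcal B$. I would decompose dyadically in the time gap, $\mathcal B=\sum_{j\in\mathbb Z}\mathcal B_j$ with $\mathcal B_j$ restricting to $t-s\sim2^j$, and prove, for $(a,b)$ in a neighborhood of $(r,r)$, the off-diagonal bound $|\mathcal B_j(F,G)|\lesssim 2^{j\beta(a,b)}\|F\|_{L_t^2L_x^{a'}}\|G\|_{L_t^2L_x^{b'}}$ with $\beta(a,b)=1-\sigma(1-\frac1a-\frac1b)$; this follows from the mixed kernel bound on the strip together with a Cauchy--Schwarz estimate that contributes the factor $2^{j}$ from the strip width. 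The crux is that $\beta(r,r)=1-\sigma(1-\frac2r)=1-\frac2q=0$ at the endpoint, so the dyadic pieces carry \emph{no} gain on the diagonal and cannot be summed as a geometric series; moreover $\beta$ is affine in $(\frac1a,\frac1b)$ with nonvanishing gradient exactly when $r<\infty$, which is why the triple $(2,\infty,1)$ must be removed. The remaining and hardest step is the Keel--Tao atomic-summation lemma: decomposing $F$ and $G$ into dyadic height atoms in $\|F(t)\|_{L_x^{r'}}$ and $\|G(t)\|_{L_x^{r'}}$ and exploiting the transverse sign change of $\beta$ off the diagonal, one sums $\sum_j\mathcal B_j$ by a real-interpolation argument in place of the divergent geometric series, obtaining $|\mathcal B(F,G)|\lesssim\|F\|_{L_t^2L_x^{r'}}\|G\|_{L_t^2L_x^{r'}}$. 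This endpoint summation is the main obstacle; once it is in hand, the three stated inequalities follow from the duality reductions of the first paragraph, and the strictly-below-the-line pairs permitted by $\frac1q+\frac{\sigma}{r}\le\frac{\sigma}{2}$ for the inhomogeneous estimate follow from the same dyadic bounds, now with $\beta\ne0$ on the relevant diagonal, by elementary geometric summation.
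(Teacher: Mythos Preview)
The paper does not prove this theorem; it is quoted from Keel--Tao \cite{keel1998endpoint} as a black box and then applied, together with Lemma~\ref{mainlem}, to obtain Theorem~\ref{mainthm}. So there is no ``paper's own proof'' to compare against.

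Your sketch is a faithful outline of the original Keel--Tao argument: the $TT^\ast$ reduction, Riesz--Thorin interpolation between the energy and dispersive bounds to get the kernel estimate $\|U(t)(U(s))^\ast\|_{L^{r'}\to L^r}\lesssim\langle t-s\rangle^{-2/q}$, Hardy--Littlewood--Sobolev plus Christ--Kiselev for the non-endpoint regime $q>2$, and the dyadic-in-$|t-s|$ decomposition with bilinear off-diagonal bounds and atomic summation for the endpoint $q=2$. Two small remarks. First, the identity $\|T\|=\|T^\ast\|=\|TT^\ast\|^{1/2}$ is literally a Hilbert-space fact; here the relevant statement is the softer equivalence that $T$, $T^\ast$, and $TT^\ast$ are simultaneously bounded between the indicated mixed-norm spaces, which is what the duality argument actually uses. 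Second, your explanation of why $(2,\infty,1)$ is excluded is slightly off: the gradient of $\beta(a,b)=1-\sigma(1-\tfrac1a-\tfrac1b)$ in $(\tfrac1a,\tfrac1b)$ is $(\sigma,\sigma)$, which never vanishes; the obstruction at $r=\infty$ is rather that there is no room in the Lebesgue scale to perturb to exponents $a,b>r$, so the two-sided sign change of $\beta$ needed for the atomic summation is unavailable. Neither point affects the validity of the plan, and you correctly note that for the application in this paper ($\sigma=\tfrac34<1$) one is always in the easy regime $q>2$, so the endpoint machinery is not needed.
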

Now we prove the decay estimate \eqref{decay} of \eqref{LS}. 

\begin{proof}[Proof of Theorem \ref{mainthm}]
    With the help of Theorem \ref{tao} and Lemma \ref{mainthm}, we can prove the Theorem \ref{mainthm} immediately.
\end{proof}

With the help of Theorem \ref{mainthm}, we can establish the global existence of the solution to discrete nonlinear Schr\"odinger equations (DNLS) with a small initial value on the hexagonal triangulation $\mathcal{T}_H.$ 

     \begin{eqnarray}\label{DNLS}
   \left\{\begin{aligned}&i\partial_tu(v,t)+\Delta_{G_H}u(v,t)+|u(v,t)|^{2\sigma}u(v,t)=0,~~\forall (v,t)\in\mathbb{Z}^2\times[0,\infty) \\
    &u(\cdot,0)=\psi\in l^2(\mathbb{Z}^2).
 \end{aligned}
   \right.
   \end{eqnarray}
\begin{thm}
    Set $\sigma\ge \frac{11}{6}$. Then there exists $\epsilon>0$ and a positive constant $C$ such that if $\|\psi\|_{l^2}\le \epsilon$, then \eqref{DNLS} has a global solution $u\in C^1((0,\infty];l^2(\mathbb{Z}^2))$. Moreover,
    \[
    \|\psi\|_{L^ql^r }\le C\epsilon,
    \]
given that
\[
\frac{1}{q}+\frac{3}{4r}\le \frac{3}{8}.
\]
As a corollary, we have 
\[
\lim_{t\rightarrow\infty}\|u(t)\|_{l^r}=0,~~\forall r>2.
\]
\end{thm}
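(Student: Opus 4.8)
The natural route is a small-data contraction (Picard) argument in a Strichartz space, the key structural simplification being that on a discrete graph the $l^p$-norms are nested: $\|f\|_{l^{p_2}}\le\|f\|_{l^{p_1}}$ for $2\le p_1\le p_2\le\infty$, so that $l^2\hookrightarrow l^r$ with norm $1$ for every $r\ge2$. First I would rewrite \eqref{DNLS} in Duhamel form,
\[
u(t)=e^{it\Delta_{G_H}}\psi+i\int_0^t e^{i(t-s)\Delta_{G_H}}\bigl(|u|^{2\sigma}u\bigr)(s)\,\mathrm{d}s=:\Phi(u)(t),
\]
and look for a fixed point of $\Phi$ in a ball $B_\rho=\{u:\ \|u\|_X\le\rho\}$ of the space $X=L^\infty((0,\infty);l^2)\cap L^{8/3}((0,\infty);l^\infty)$, the two factors being the $\frac{3}{4}$-admissible Strichartz norms corresponding to the pairs $(\infty,2)$ and $(8/3,\infty)$. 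Interpolating the two norms of $X$ (Hölder in $t$, nesting in the space variable) recovers $L^ql^r$ for every pair with $\frac{1}{q}+\frac{3}{4r}\le\frac{3}{8}$.

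Two ingredients are then needed. The first is the Strichartz estimate of Theorem \ref{mainthm}, which applied to $\Phi(u)$ gives $\|\Phi(u)\|_X\le C\|\psi\|_{l^2}+C\bigl\||u|^{2\sigma}u\bigr\|_{L^{\tilde q'}l^{\tilde r'}}$ for any $\frac{3}{4}$-admissible pair $(\tilde q,\tilde r)$. The second is a nonlinear estimate. Taking $(\tilde q,\tilde r)=(8/3,\infty)$, so $(\tilde q',\tilde r')=(8/5,1)$, and using $\bigl|\,|u|^{2\sigma}u\,\bigr|=|u|^{2\sigma+1}$ together with $2\sigma-1\ge0$ and the $l^p$-nesting, for each fixed time $s$
\[
\bigl\|\,|u|^{2\sigma}u(s)\,\bigr\|_{l^1}=\sum_{v}|u(v,s)|^{2\sigma+1}\le\|u(s)\|_{l^\infty}^{2\sigma-1}\,\|u(s)\|_{l^2}^{2}.
\]
Taking the $L^{8/5}_s$-norm and using the conservation of $\|u(s)\|_{l^2}$ together with Hölder in $s$ (and, if needed, interpolation of $\|u(s)\|_{l^\infty}$ between $L^\infty$ and $L^{8/3}$ in time) bounds $\bigl\||u|^{2\sigma}u\bigr\|_{L^{8/5}l^1}$ by a product of powers of $\|u\|_{L^{8/3}l^\infty}$ and $\|u\|_{L^\infty l^2}$ of total degree $2\sigma+1$, that is, by $C\|u\|_X^{2\sigma+1}$. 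The hypothesis $\sigma\ge\frac{11}{6}$ enters precisely here: it is what allows the time-exponents to balance over the \emph{infinite} interval $(0,\infty)$, so that no positive power of the time-length is produced, while keeping the intermediate exponents admissible and nonnegative. Running the same computation on the difference, via $\bigl|\,|z_1|^{2\sigma}z_1-|z_2|^{2\sigma}z_2\,\bigr|\lesssim(|z_1|^{2\sigma}+|z_2|^{2\sigma})|z_1-z_2|$, yields $\|\Phi(u)-\Phi(w)\|_X\lesssim(\|u\|_X^{2\sigma}+\|w\|_X^{2\sigma})\|u-w\|_X$.

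From $\|\Phi(u)\|_X\le C\|\psi\|_{l^2}+C\|u\|_X^{2\sigma+1}$ and the difference estimate, choosing $\rho=2C\epsilon$ with $\epsilon$ small makes $\Phi$ map $B_\rho$ into itself as a contraction, hence a unique global fixed point $u\in X$. The $l^2$-conservation \eqref{energy} holds for \eqref{DNLS} (pair the equation with $u$ and take imaginary parts, using that $\Delta_{G_H}$ is self-adjoint and $\langle|u|^{2\sigma}u,u\rangle\ge0$), so $\|u(t)\|_{l^2}\le2\epsilon$ for all $t$, and $C^1$-regularity in $t$ with values in $l^2$ follows from $\partial_t u=i\Delta_{G_H}u+i|u|^{2\sigma}u$, since $\Delta_{G_H}$ is bounded on $l^2$ and $z\mapsto|z|^{2\sigma}z$ is locally Lipschitz there (here $2\sigma\ge1$). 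Interpolating the two norms of $X$ gives $\|u\|_{L^ql^r}\le C\epsilon$ whenever $\frac{1}{q}+\frac{3}{4r}\le\frac{3}{8}$. For the scattering corollary, fix $r>2$ and choose a finite $q$ with $(q,r)$ in that region; then $t\mapsto\|u(t)\|_{l^r}$ belongs to $L^q((0,\infty))$, while $\partial_t u$ is uniformly bounded in $l^r$ (the operator $\Delta_{G_H}$ is bounded on $l^r$, being a finite-range difference operator, and $\||u|^{2\sigma}u\|_{l^r}=\|u\|_{l^{(2\sigma+1)r}}^{2\sigma+1}\le\|u\|_{l^r}^{2\sigma+1}\le(2\epsilon)^{2\sigma+1}$), so $t\mapsto\|u(t)\|_{l^r}$ is uniformly continuous; a uniformly continuous, $L^q$-integrable function on $(0,\infty)$ must tend to $0$ as $t\to\infty$.

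The only genuinely non-routine step is the nonlinear estimate: balancing the Strichartz time-exponents against the gains coming from $l^p$-nesting over the \emph{infinite} time interval, where there is no room for a Hölder loss in $t$ — this is exactly the arithmetic that forces the constraint $\sigma\ge\frac{11}{6}$. The rest is a standard Banach fixed-point argument, energy conservation, and interpolation of mixed-norm spaces.
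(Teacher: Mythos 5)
Your argument is correct and is, at its core, the same strategy as the paper: Duhamel formula, the Strichartz estimate of Theorem \ref{mainthm}, and a small-data Banach fixed point. The differences are in the bookkeeping, and they are worth recording. The paper contracts in the space $\mathcal{M}$ defined by the supremum of all $L^ql^r$ norms with $\frac{1}{q}+\frac{3}{4r}\le\frac38$, chooses the dual pair $(\tilde q',\tilde r')=(1,2)$, and estimates the forcing by the single diagonal norm $\||u|^{2\sigma}u\|_{L^1l^2}\le\|u\|_{L^{2\sigma+1}l^{2(2\sigma+1)}}^{2\sigma+1}$, the hypothesis $\sigma\ge\frac{11}{6}$ being invoked there to place the pair $(2\sigma+1,2(2\sigma+1))$ inside the allowed region. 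You contract instead in $L^{\infty}l^2\cap L^{8/3}l^{\infty}$, choose $(\tilde q',\tilde r')=(8/5,1)$, and estimate $\||u|^{2\sigma+1}\|_{l^1}\le\|u\|_{l^{\infty}}^{2\sigma-1}\|u\|_{l^2}^{2}$ followed by H\"older and time-interpolation; recovering the full range $\frac{1}{q}+\frac{3}{4r}\le\frac38$ from the two endpoint norms via interpolation and $l^p$-nesting is fine. One inaccuracy: your exponent count closes as soon as $\frac{8(2\sigma-1)}{5}\ge\frac{8}{3}$, i.e.\ $\sigma\ge\frac43$, so it is not true that your arithmetic forces $\sigma\ge\frac{11}{6}$; this is harmless, since the theorem assumes the stronger bound, but the threshold $\frac{11}{6}$ is tied to the paper's choice of norms, not to yours. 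On the other hand, you supply details the paper leaves implicit: the difference estimate for the contraction, the upgrade to $C^1$ regularity in $l^2$ (boundedness of $\Delta_{G_H}$ and local Lipschitzness of the nonlinearity), and an actual proof of the final claim $\lim_{t\to\infty}\|u(t)\|_{l^r}=0$ for $r>2$ via uniform continuity of $t\mapsto\|u(t)\|_{l^r}$ combined with its $L^q$-integrability, which the paper states but does not prove.
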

\begin{proof}
    Define a metric space
    \[
    \mathcal{M}=\{u(t):\sup_{q,r}\|u\|_{L^ql^r}<2C\|\psi\|_{l^2},~\text{where}~\frac{1}{q}+\frac{3}{4r}\le\frac{3}{8}\},
    \]
    where $C$ is the constant in Strichartz estimate \ref{mainthm}.
    Let $L$ be an operator such that
    \[
    L(u)=e^{it\Delta_{G_H}}\psi+i\int_0^te^{i(t-s)\Delta_{G_H}}|u|^{2\sigma}u(s)\mathrm{d}s.
    \]
    Let the exponent pair $(\tilde{q},\tilde{r})=(\infty,2),$ by \eqref{strichartz} we have
    \begin{align*}
        &\sup_{(q,r):\frac{1}{q}+\frac{3}{4r}\le\frac{3}{8}}\|Lu(t)\|_{L^ql^r}\le C\|\psi\|_{l^2}+C\|u^{2\sigma+1}\|_{L^1l^2}, \\
         &\le C\|\psi\|_{l^2}+C\|u\|^{2\sigma+1}_{L^{2\sigma+1}l^{2(2\sigma+1)}}.    \end{align*}
Since $\sigma\ge\frac{11}{6}$, we have
\[
\frac{1}{2\sigma+1}+\frac{3}{4(2\sigma+1)}\le\frac{3}{8}.
\]
Then we obtain
\begin{align*}
    \sup_{(q,r):\frac{1}{q}+\frac{3}{4r}\le\frac{3}{8}}\|Lu(t)\|_{L^ql^r}&\le C\|\psi\|_{l^2}+C\|u\|_{\mathcal{M}}^{2\sigma+1}\\
    &\le C\|\psi\|_{l^2}+C(2C\epsilon)^{2\sigma+1}.
\end{align*}
    Let $\epsilon$ be small enough, then we have 
    \[
    \sup_{(q,r):\frac{1}{q}+\frac{3}{4r}\le\frac{3}{8}}\|Lu(t)\|_{L^ql^r}\le2C\|\psi\|_{l^2}.
    \]

Therefore, the image $L(\mathcal{M})$ is contained in $\mathcal{M}$ given that $\|\psi\|$ is small enough.
Let $u,\hat{u}\in \mathcal{M}$, then with the same initial data $u(\cdot,0)=\hat{u}(\cdot,0)=\psi,$
\begin{align*}
    \sup_{(q,r):\frac{1}{q}+\frac{3}{4r}\le\frac{3}{8}}\|Lu-L\hat{u}\|&\le C_0\|u-\hat{u}\|_{L^{2\sigma+1}l^{2(2\sigma+1)}}(\|u\|_{L^{2\sigma+1}l^{2(2\sigma+1)}}+\|\hat{u}\|_{L^{2\sigma+1}l^{2(2\sigma+1)}}),\\
&\le2CC_1\|u-\hat{u}\|_{L^{2\sigma+1}l^{2(2\sigma+1)}}\|\psi\|_{l^2}^{2\sigma}.
\end{align*}
Therefore $L$ is a contraction map given that $\psi$ is sufficiently small.
Thus we obtain a global solution of \eqref{DNLS}.
\end{proof}

\section{Appendix}\label{app}
\begin{prop}\label{calculation}
	Let $\Sigma_1^2$ and $\Sigma_2'$ be the curves defined in Section \ref{sec:4}, then $\Sigma_1^2\cap\Sigma_2'=\{(k_1\pi+\frac{\pi}{2},k_2\pi+\frac{\pi}{2}):k_1,k_2\in\mathbb{Z}\}$.
\end{prop}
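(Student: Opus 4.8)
The plan is to convert both defining equations into algebraic relations in one pair of bounded variables and then eliminate one of them. Introduce $\mu=\cos^2\frac{x_1+x_2}{2}$ and $\nu=\cos^2\frac{x_1-x_2}{2}$, both lying in $[0,1]$, together with the sign $\varepsilon=\operatorname{sgn}\!\big(\cos\tfrac{x_1+x_2}{2}\cos\tfrac{x_1-x_2}{2}\big)$. The half-angle identities give $\cos x_1\cos x_2=\mu+\nu-1$, $\cos x_1+\cos x_2=2\varepsilon\sqrt{\mu\nu}$, $\cos(x_1-x_2)=2\nu-1$, $\cos^2x_1+\cos^2x_2=2\mu\nu+2(1-\mu)(1-\nu)$ and $\sin^2\frac{x_1+x_2}{2}=1-\mu$. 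Substituting, equation \eqref{eq2} becomes $8\varepsilon(\mu\nu)^{3/2}+2\mu\nu-\mu+\nu+1-2\nu^2=0$, while \eqref{cancel_eq3} becomes the honest polynomial relation $(2\nu-1)\mu^2+2(1-\nu)\mu-(1-\nu)(2\nu^2+\nu+1)=0$. Now $\cos x_1=\cos x_2=0$ is equivalent to $\mu+\nu=1$ and $\mu\nu=0$, i.e.\ to $(\mu,\nu)\in\{(0,1),(1,0)\}$, and those two points are precisely the points with $x_1,x_2\in\tfrac{\pi}{2}+\pi\mathbb{Z}$; a one-line substitution (the first equation holding for either sign of $\varepsilon$, since $\mu\nu=0$ there) shows both solve both relations, which gives the inclusion $\supseteq$.

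For the reverse inclusion I would eliminate $\mu$, using in addition the fact that the curves $\Sigma_1^2,\Sigma_2'$ are considered on the Hessian-degeneracy set $\Gamma(v)$, so that equation \eqref{eq1} also holds — in the new variables it reads $2\varepsilon\sqrt{\mu\nu}\,(2\mu-1)+(\mu+\nu-1)=0$. Isolating the radicals in the transformed \eqref{eq2} and \eqref{eq1} and squaring gives $64\mu^3\nu^3=\big[(1-2\nu)\mu+(2\nu+1)(\nu-1)\big]^2$ and $4\mu\nu(2\mu-1)^2=(\mu+\nu-1)^2$. Reducing each of these modulo the (quadratic in $\mu$) transformed \eqref{cancel_eq3} collapses it to a relation affine in $\mu$; comparing the two affine relations expresses $\mu$ as an explicit rational function of $\nu$ and leaves a single polynomial equation $\Pi(\nu)=0$. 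For the squared-\eqref{eq2} branch it is useful to note that, modulo \eqref{cancel_eq3}, completing the square turns the comparison into $\big[(2\nu-1)A(\nu)+(1-\nu)B(\nu)\big]^2$ versus $4\nu^3(1-\nu)B(\nu)^2$ with explicit quintic polynomials $A(\nu),B(\nu)$, which makes the sign analysis tractable.

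It then remains to settle this one-variable equation. The values $\nu\in\{0,\tfrac12,1\}$ are handled directly from the transformed equations: $\nu=1$ forces $\mu=0$ and $\nu=0$ forces $\mu=1$ by \eqref{cancel_eq3} (both giving the claimed points), while $\nu=\tfrac12$ degenerates that quadratic to $\mu=1$, and then the transformed \eqref{eq2} reads $2\sqrt2\,\varepsilon+1=0$, impossible for $\varepsilon=\pm1$. For $\nu\in(0,1)\setminus\{\tfrac12\}$ one checks that the reduced polynomial has no zero there (equivalently that the relevant pair of squares is never equal), after verifying that the denominators appearing when solving for $\mu$ do not vanish on $(0,1)$. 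Tracing the surviving solutions $(\mu,\nu)\in\{(0,1),(1,0)\}$ back through the substitution recovers $x_1,x_2\in\tfrac{\pi}{2}+\pi\mathbb{Z}$, and with the inclusion $\supseteq$ above this yields $\Sigma_1^2\cap\Sigma_2'=\{(k_1\pi+\tfrac{\pi}{2},k_2\pi+\tfrac{\pi}{2}):k_1,k_2\in\mathbb{Z}\}$.

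The main obstacle is this final elimination-and-root-counting step: one must carry out the resultant/substitution computation explicitly, discard the extraneous branches introduced by squaring the two radicals, confirm non-vanishing of the denominators on $(0,1)$, and then decide the sign of an explicit one-variable polynomial of moderately high degree on $[0,1]$ — most cleanly via a Sturm sequence or an interval sign analysis. None of the individual steps is conceptually hard, but the bookkeeping is heavy; this is exactly the ``lengthy calculation'' deferred to the appendix in the body of the paper.
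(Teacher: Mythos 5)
Your reduction to the variables $\mu=\cos^2\frac{x_1+x_2}{2}$, $\nu=\cos^2\frac{x_1-x_2}{2}$ is sound: I checked that your transformed versions of \eqref{eq2}, \eqref{cancel_eq3} and \eqref{eq1} are correct, that $(\mu,\nu)\in\{(0,1),(1,0)\}$ characterizes $\cos x_1=\cos x_2=0$, and that your treatment of $\nu\in\{0,\tfrac12,1\}$ is right; the inclusion $\supseteq$ is therefore fine. But the argument stops exactly where the proposition begins. For $\nu\in(0,1)\setminus\{\tfrac12\}$ you write that ``one checks that the reduced polynomial has no zero there,'' and your closing paragraph concedes that the elimination has not been carried out: the polynomial $\Pi(\nu)$ is never exhibited, the extraneous branches created by squaring the two radicals are never discarded, the non-vanishing of the denominators used to solve for $\mu$ is never verified, and the sign of $\Pi$ on $[0,1]$ is never decided (by Sturm sequence or otherwise). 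Whether the two curves meet away from the points $\cos x_1=\cos x_2=0$ is precisely what this computation must decide, so until it is done nothing is proved; as it stands this is a program for a proof, not a proof. A secondary point: your elimination feeds in \eqref{eq1} as an additional relation, but $\Sigma_1^2$ and $\Sigma_2'$ are defined as the full solution sets of \eqref{eq2} and \eqref{cancel_eq3}, so what your scheme would establish is a statement about $\Sigma_0\cap\Sigma_1^2\cap\Sigma_2'$ --- weaker than the proposition as written, although sufficient for Case (C) of Lemma \ref{lemest}; you should either say this explicitly or avoid using \eqref{eq1}.

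The paper's own proof shows what the missing idea looks like: it never eliminates down to a one-variable resultant. After passing to $A=\frac{x_1-x_2}{2}$, $B=\frac{x_1+x_2}{2}$, it solves the $\Sigma_2'$ relation \eqref{change2} for $\sin^2A$, with the admissible branch forced by $\sin^2A\le 1$ (equation \eqref{cosa}); it disposes of the region $(A,B)\in(-\frac{\pi}{2},\frac{\pi}{2})^2$ by a positivity argument applied directly to \eqref{eq2}; and on the remaining region, where $\cos A<0$ and $\cos B>0$, it shows by a short two-case sign computation ($\cos^2B\gtrless\tfrac12$) that the expression defining $\Sigma_1^2$ is strictly negative, so no further intersection points exist. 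In other words, the heavy algebra you defer is replaced by a monotonicity/sign estimate along the branch determined by $\Sigma_2'$. To complete your version you must either import such a sign argument or actually produce the eliminant and certify that it has no root in $(0,1)\setminus\{\tfrac12\}$, with the branch and denominator issues settled; neither is done in the proposal.
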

\begin{proof}
	Let $A=\frac{x_1-x_2}{2}$ and $B=\frac{x_1+x_2}{2}$. Then by the change of variables, we can rewrite \eqref{eq2} and \eqref{cancel_eq3} as
	\begin{align}\label{change1}
		8\cos^3A\cos^3B-6\cos^2A\cos^2B-2\cos^4B-\cos^2A+\cos^2B+1=0,
	\end{align}
and 
\begin{align}\label{change2}
	\cos^2 B(\cos^2A+\cos^2B-1)^2=\sin^2B(\cos^2B-\cos^2A+1)^2,
\end{align}
respectively. When $\cos^2B=\frac{1}{2},$ by equation \eqref{change2} we have $\cos^2 A=1,$ which means $(x_1,x_2)\in\Sigma_0\cap\Sigma_1^1.$ Then by equation \eqref{eq1}, the case will be attributed to Case (A) or Case (B). So without loss of generality, we assume that $\cos^2B\neq \frac{1}{2}$.
By direct calculation,
we have
\begin{align*}
	\sin^2A=\frac{\cos^2B(1\pm|\sin 2B|)}{2\cos^2B-1}
\end{align*}
Since $\sin^2A\in[0,1]$,
\begin{align}\label{cosa}
	\sin^2A=\frac{\cos^2B(1-|\sin 2B|)}{2\cos^2B-1}
\end{align}
Due to periodicity, we assume that $(A,B)\in(-\frac{\pi}{2},\frac{3\pi}{2})\times(-\frac{\pi}{2},\frac{\pi}{2}).$
For $(A,B)\in(-\frac{\pi}{2},\frac{\pi}{2})^2$, we have $(x_1,x_2)\in(-\pi,\pi)^2.$ Since $A\in(-\frac{\pi}{2},\frac{\pi}{2})$, $\cos{x_1}+\cos{x_2}>0.$ Therefore, $\cos^2x_1+\cos^2x_2-\cos{x_1}\cos{x_2}\cos(x_1-x_2)>0,$ so the equation \eqref{eq2} does not have a solution in this case. Therefore, we only need to consider the case when $(A,B)\in(\frac{\pi}{2},\frac{3\pi}{2})\times(-\frac{\pi}{2},\frac{\pi}{2}).$ 
Now we prove that if the equation \eqref{cosa} holds, then 
\[
8\cos^3A\cos^3B-6\cos^2A\cos^2B-2\cos^4B-\cos^2A+\cos^2B+1<0.
\]
We divide our proof into two parts. 
\begin{enumerate}[(I)]
	\item Firstly, we consider the case when $\cos^2B>\frac{1}{2}.$ Since $\cos A<0,$ we only need to prove 
	\begin{align}\label{caseI}
		-2\cos^4B-\cos^2A+\cos^2B+1=\cos^2B(1-2\cos^2B)+\sin^2A<0
	\end{align}
By \eqref{cosa}, we have
\begin{align*}
	\cos^2B(1-2\cos^2B)+\sin^2A=\frac{\cos^2B(\sin^22B-|\sin 2B|)}{2\cos^2B-1}<0.
\end{align*}
Therefore, we finish the proof of case (I).
\item When $\cos^2B<\frac{1}{2},$ by direct calculation, 
\begin{align*}
&-2\cos^2A\cos^2B-2\cos^4B-\cos^2A+\cos^2B+1
\\&=\frac{\cos^2B(\sin^22B-|\sin 2B|+2\sin^2B-2\cos^2B|\sin2B|)}{2\cos^2B-1}.
\end{align*}
Since $\cos^2B<\frac{1}{2}$, we have 
\[
2\sin^2B-|\sin2B|=2|\sin B|(|\sin B|-|\cos B|)>0.
\]
Since $\sin^2B>\frac{1}{2},$ we have
\[
\sin^22B-2\cos^2B|\sin 2B|=2\cos^22B(2\sin^2B-|\sin 2B|)>0.
\]
Therefore, we have $$-2\cos^2A\cos^2B-2\cos^4B-\cos^2A+\cos^2B+1<0.$$ Thus we finish the proof.
\end{enumerate}
\end{proof}

\textbf{Acknowledgements.} H. Ge is supported by NSFC, no.12341102, no.12122119. B. Hua is supported by NSFC, no.12371056, and by Shanghai Science and Technology Program [Project No. 22JC1400100]. 	 The authors would like to thank Jiawei Cheng for helpful advice.
\bibliographystyle{plain}
\bibliography{reference}

\noindent Huabin Ge, hbge@ruc.edu.cn\\[2pt]
\emph{School of Mathematics, Renmin University of China, Beijing, 100872, P.R. China} 
\\

\noindent Bobo Hua, bobohua@fudan.edu.cn\\[2pt]
\emph{School of Mathematical Sciences, LMNS, Fudan University, Shanghai, 200433, P.R. China}
\\

\noindent Longsong Jia, jialongsong@stu.pku.edu.cn\\[2pt]
\emph{School of Mathematical Sciences, Peking University, Beijing, 100871, P.R. China}
\\

\noindent Puchun Zhou, pczhou22@m.fudan.edu.cn\\[2pt]
\emph{School of Mathematical Sciences, Fudan University, Shanghai, 200433, P.R. China}

\end{document}